\documentclass[11pt]{article}
\usepackage[colorlinks=true,pdfborder=001,linkcolor=blue,urlcolor=blue,citecolor=blue]{hyperref}
\usepackage{graphicx}
\usepackage{float}
\usepackage{interval}
\usepackage{url}
\usepackage{bbm}
\usepackage{stackengine}
\usepackage{nccmath}
\usepackage{mathtools}

\usepackage{mathrsfs}
\usepackage[title]{appendix}
\usepackage{adjustbox}
\usepackage{subcaption}
\usepackage{pgfplots}
\usepackage{tikz}
\usepackage{tikz,fullpage}
\usepackage{rotating}
\usepackage{tabu}
\usepackage{hhline}
\usepackage{multirow}
\usepackage{float}
\restylefloat{table}
\usepackage{adjustbox}
\usepackage[round]{natbib}
\usepackage{booktabs}
\usepackage{array}
\usepackage{tabularx}
\usepackage[utf8]{inputenc}
\usepackage{amssymb,amsmath}
\usepackage{amsfonts}
\usepackage{latexsym}
\usepackage[english]{babel}
\usepackage[T1]{fontenc}
\usepackage{eepic}
\usepackage{epic}
\usepackage{epsfig}
\usepackage{graphics,color}
\usepackage{verbatim}
\usepackage{lscape}
\usepackage{amsthm}
\usepackage{amsmath}
\usepackage{placeins}
\usepackage{float}
\usepackage{caption}
\usepackage{color}
\usepackage{setspace}
\usepackage[ruled,linesnumbered]{algorithm2e}
\usepackage{frcursive}

\usepackage{mathrsfs}
\usepackage{soul}
\usepackage{tikz}

\usepackage{tikz,fullpage}
\usetikzlibrary{arrows,petri,topaths,automata}

\makeatletter

\renewcommand*\thesection{\arabic{section}}

\newtheorem{prop}{Proposition}

\newtheorem{exam}{Example}
\newtheorem*{exam*}{Example}

\newif\ifbold
\boldfalse
\boldtrue
\newcommand{\bbf}{\ifbold\bgroup\bf\fi}
\newcommand{\ebf}{\ifbold\egroup\fi}

\renewcommand{\textbf}[1]{\begingroup\bfseries\mathversion{bold}#1\endgroup}
\makeatletter    
\renewcommand{\section}{\@startsection {section}{1}{\z@}%
             {-2ex \@plus -1ex \@minus -.2ex}%
             {1ex \@plus.2ex}%
             {\normalfont\Large\rmfamily\bfseries}}
\renewcommand{\subsection}{\@startsection{subsection}{2}{\z@}%
             {-1.25ex\@plus -1ex \@minus -.2ex}%
             {.75ex \@plus .2ex}%
             {\normalfont\large\rmfamily\bfseries}}
\def\@listI{\leftmargin\leftmargini       
            \parsep .25ex \@plus .1ex     
            \topsep .25ex \@plus .1ex     
            \itemsep \parsep}
\let\@listi\@listI
\@listi
\makeatother
\makeatletter

\@addtoreset{equation}{section}
\makeatother
\makeatletter

\@addtoreset{table}{section}
\makeatother

\usepackage[margin=1in]{geometry}
\usepackage{graphicx}
\graphicspath{ {./Figures/} }
\definecolor{purple}{rgb}{0.4,0.2,1}
\usepackage{titling}

\title{
\LARGE\bf Scheduling and Routing in the Flexible Job Shop with Heterogeneous Transbots and Zoning: A Constraint Programming Approach
\vspace{1ex}
}

\author{\large Arnovi Moinuddin,$^{1,*}$ El Mehdi Er Raqabi,$^{1,2}$ Pascal Van Hentenryck$^{1}$ \\
\footnotesize$^1$\emph{H. Milton Stewart School of Industrial and Systems Engineering, Georgia Institute of Technology, Atlanta, USA}\\
\footnotesize$^2$\emph{Department of Operations and Decision Systems, Université Laval, Québec, Canada}\\
\footnotesize$^*$\emph{Corresponding Author: arnovi.moinuddin@gatech.edu}\\
}

\date{}

\begin{document}
\maketitle

\vspace{2cm}
\begin{abstract}
\vspace{0.5cm}
 Coordinating production and material transfers is increasingly important in modern manufacturing systems equipped with mobile transfer robots, known as transbots. This study considers a flexible job shop environment in which heterogeneous transbots transport parts between machines. The shop floor is partitioned into zones, with each transbot assigned to a specific zone, and inter-zone movements are facilitated through designated handoff points. These zoning constraints, transbot heterogeneity, and inter-zone handoffs give rise to a challenging variant of the flexible job shop problem with embedded transbot-routing features, resulting in substantial computational complexity. Motivated by a real manufacturing setting, two constraint programming formulations that integrate production scheduling with transbot routing are proposed: an arc-based formulation that explicitly models machine-to-machine transfers, and an operation-embedded formulation that embeds transfer decisions directly within the operation scheduling structure, leading to tighter synchronization between production and transportation decisions. Both formulations capture machine flexibility, zoning restrictions, handoff coordination, and collision-free path planning. To efficiently solve the resulting problem, a book-and-release strategy is proposed. It coordinates transbot movements without enforcing rigid routing patterns. Computational experiments on case study-adapted benchmark-based demonstrate that the proposed formulations generate high-quality solutions with strong computational performance. The generation of instances is described in detail to support future work in this emerging area.

\vspace{0.2cm}
{\footnotesize \emph{Keywords}: Manufacturing, Flexible Job Shop, Heterogeneous Transbots, Constraint Programming, Industry 5.0}\par
\vspace{0.2cm}

\end{abstract}

\newpage
\setlength{\parindent}{1em}
\setlength{\parskip}{0.5em}
\doublespacing
\newpage

\vspace{-3cm}

\section{Introduction}\label{section:1}


Growing demand for rapid fulfillment has increased pressure on manufacturers to improve responsiveness while maintaining cost efficiency. Advances in automation have encouraged firms to adopt technologies that mitigate labor shortages, address demand variability, and improve throughput. Large retailers and logistics providers, including Amazon, Walmart, and UPS, have incorporated AI-enabled transbots in their fulfillment operations to manage storage and material movement \citep{Repko2023WalmartAutomation, Dresser2025AmazonRobotics}. Such systems have been shown to enhance productivity, optimize space utilization, and improve workplace safety \citep{Jenkins2025WarehouseRobotics}. These developments motivate an examination of how transbot technologies can be integrated into manufacturing processes and how their inclusion affects overall workflow performance.
 
Rapid order fulfillment typically involves two primary stages: production and distribution. The present study draws on insights from intelligent manufacturing \citep{kusiak2019intelligent}. In manufacturing, workflow optimization focuses on determining the sequence of operations across machines and on evaluating system performance using metrics such as makespan, defined as the time required to complete all jobs. Each job consists of a sequence of operations needed to produce a finished good, and the makespan is strongly influenced by the degree of machine flexibility. When each operation can be processed on exactly one machine, the resulting classical job shop scheduling problem has been extensively studied \citep{xiong2022survey}.
In contrast, systems in which operations can be processed on multiple machines can exhibit partial or total flexibility \citep{dauzere2024flexible}. This setting gives rise to the flexible job shop problem (FJSP), which extends classical sequencing decisions by assigning each operation to an eligible machine out of a series of alternatives. Flexible job shop models capture important features of real manufacturing environments, including variability in resource availability and machine reliability. The models developed in this study are based on the flexible job-shop framework.

In many manufacturing facilities, machines are arranged non-sequentially across the shop floor. In such disjointed layouts, partially completed parts must be transported between machines between operations, and the time required for these transfers contributes directly to the makespan. To mitigate this effect, manufacturers increasingly adopt automated material-handling systems, paralleling the use of warehouse robotics in fulfillment operations. Common platforms include automated guided vehicles (AGVs) and autonomous mobile robots, which navigate using markers or sensors such as light detecting and ranging (LiDAR), as well as collaborative robots that operate alongside human workers \citep{Jenkins2025WarehouseRobotics}. In fulfillment centers, inventory is typically organized into designated areas, and AGVs follow predetermined paths to transport items while minimizing travel time and avoiding collisions. This problem can be seen as a variant of the capacitated vehicle routing problem, which is known to be NP-hard \citep{mo2024capacitated}. The present study integrates similar transbot systems into the flexible job shop environment to coordinate material transfers alongside production scheduling (see Figure \ref{fig:transbot}).

\begin{figure}[t!]
    \centering
    \includegraphics[width=0.9\textwidth]{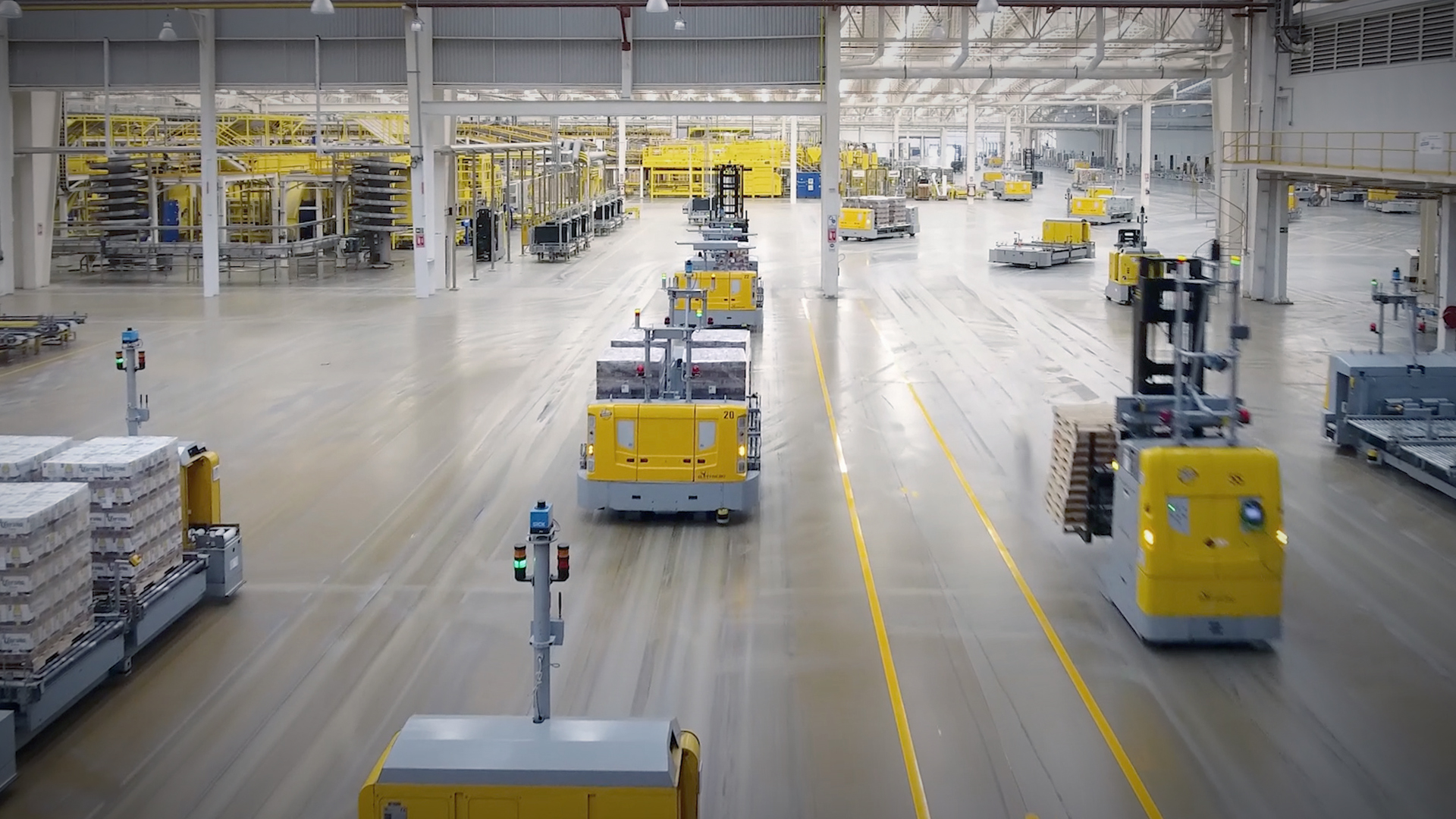}
    \caption{Transbots on a Manufacturing Floor}
    \label{fig:transbot}
\end{figure}

The flexible job shop problem with transbots (FJSPT) considers the simultaneous assignment of machines to operations and transbots to material transfers. The combination of these two NP-hard components results in a complex solution space that has received limited attention in the literature, particularly in the context of constraint programming (CP) formulations. Existing models, such as that of \cite{ham2020transfer}, assume homogeneous robots capable of serving all machines. More realistic manufacturing environments require additional considerations, including transbot-specific weight capacities and restrictions to designated areas corresponding to accessible machines. CP has been increasingly applied to scheduling, routing, and resource allocation problems due to its effectiveness in solving large-scale, combinatorial instances \citep{hooker2002logic}.

This study is motivated by a leading experimental manufacturing facility at Georgia Tech, United States. The facility provides machine and job workflows to external organizations seeking to develop new parts. The objective is to simulate production at maximum capacity by coordinating transbots and machines on the shop floor. A key requirement is to accommodate transbots within different zones. Transbots follow fixed paths and are restricted to designated zones, with intersections reserved for handoffs to ensure coordinated, collision-free material transfers. A \emph{zone} is a predefined subset of machines to which a transbot is exclusively assigned and within which it can operate freely. A \emph{handoff} point is a designated transfer location that enables parts to move between zones by allowing transbots from different zones to exchange loads. These characteristics give rise to the flexible job shop problem with heterogeneous transbots (FJSPT-H).

This paper proposes a threefold contribution as follows:

\begin{enumerate}
    \item \textbf{CP models for the FJSPT-H.} Two CP formulations for the FJSPT-H are proposed. The first is an \emph{arc-based} formulation, in which material transfers are modeled explicitly through arcs connecting pickup and drop-off machines. The second is an \emph{operation-embedded} formulation that integrates transfer decisions directly into the sequencing of operations. Both formulations extend the FJSPT beyond the standard assumption of identical transbots by explicitly considering heterogeneous transbots. To manage congestion in high-density facilities, the FJSPT-H incorporates zoning constraints that restrict each transbot to a subset of machines located within a given zone. These restrictions also reflect practical security and safety requirements, as certain transbots are authorized to operate only within specific zones. Designated handoff points allow transbots to exchange materials when consecutive operations belong to different zones. From a modeling perspective, a handoff point is treated similarly to a machine that performs no operations. Overall, zoning reduces congestion and enhances coordination by controlling access to sensitive or high-traffic areas by transbots.
   \item \textbf{A comparison between formulations.} A systematic comparison in terms of computational complexity, model size, and scalability is provided. The arc-based formulation offers a more compact representation for problems with many inter-zone transfers. In contrast, the operation-embedded formulation provides tighter integration of transfer scheduling and machine operations, potentially improving solution quality at the expense of a larger search space.
   \item \textbf{A book-and-release strategy for transbot routing.} The \emph{book-and-release} strategy reserves a path exclusively for a single transbot, preventing conflicts and ensuring collision-free movement. The model also incorporates acceleration strategies, including valid inequalities and parameter tuning for the CP solver. This path isolation, combined with acceleration strategies, enables more efficient exploration of the solution search space.
   \item \textbf{Extensive computational results.} The CP models for the FJSPT-H are tested on small- and medium-scale instances derived from standard benchmarks \citep{deroussi2010simultaneous} and adapted to reflect the experimental manufacturing facility at Georgia Tech. This paper addresses these instances by incorporating a transfer-time matrix for transbots, designated handoff points, and fixed paths corresponding to the original operation and machine sequences. Empirical results highlight the trade-offs between these approaches, guiding practitioners in selecting the appropriate formulation depending on problem characteristics and computational resources. The generation of instances is described in detail to support future work in this emerging area and facilitate comparison with alternative approaches.
\end{enumerate}

The remainder of this paper is structured as follows. Section \ref{section:2} discusses the literature review. Section \ref{section:3} describes the problem. Sections \ref{section:4} and \ref{section:5} describe the two CP formulations. Section \ref{section:6} compares the models and discusses the solution methodology. Section \ref{section:7} discusses the computational results, while Section \ref{section:8} summarizes the conclusions. 

\section{Literature Review}\label{section:2}

The FJSP has been evolving significantly over the past few years (for recent surveys, see \cite{zhang2019review} and \cite{destouet2023flexible}). This section reviews the FJSPT variants. Then, it discusses the optimization techniques used to solve it.

Compared with the FJSP, the FJSPT is more recent and was introduced with the advent of transbots in manufacturing. In the FJSPT context, the makespan incorporates transfer time between sequential operations. Most studies evaluate systems with homogeneous transbots, which facilitates flexibility well, but can be unrealistic in practical manufacturing settings. \cite{deroussi2010simultaneous} propose the first small-scale benchmark instances for the FJSPT, extending the original \cite{bilge1995time} job-shop instances, in which machine sequences are fixed for each job, by introducing machine flexibility that allows each operation to be performed on up to two alternative machines across 10 instances. This small-scale setting considers instances with at most 21 operations. \cite{kumar2011simultaneous} curate a set of 7 job sets with 4 layout matrices based on \cite{bilge1995time} cases, altering the ratio of travel time between machines to processing time of an operation on a machine. \cite{ham2020transfer} adopt the medium-scale instance set from \cite{hurink1994tabu} and incorporate 3 varied transition time matrices. This set encompasses test cases with up to 300 operations. \cite{homayouni2023multistart} consists of medium-scale instances from \cite{brandimarte1993routing} and \cite{chambers1996new} with 55 to 240 operations. This paper proposes extensions to the instances of \cite{deroussi2010simultaneous} and \cite{ham2020transfer} by incorporating handoff points and zoning into the transfer time matrices for transbots.

In isolation, the FJSP is NP-hard \citep{dauzere2024flexible}. The scheduling of transbots, which can be seen as a vehicle routing problem, is itself NP-complete \citep{lenstra1981complexity}. Consequently, integrating job sequencing with transbot scheduling results in a highly complex optimization problem that requires advanced solution strategies. Two complementary surveys by \cite{berterottiere2024flexible} and \cite{xin2025review} review popular approaches used to tackle the FJSPT. 

Initial studies first incorporated transportation into the job shop problem (JSP), in which each job follows a fixed machine sequence, offering no machine flexibility. \cite{raman1986simultaneous} introduces a single shared transportation resource to a shop floor. \cite{bilge1995time} explore the addition of a small number of identical transportation resources by formulating a mixed integer programming (MIP) model and solving it using an iterative heuristic procedure, referred to as the sliding time window, which alternates between generating machine schedules and constructing feasible transportation schedules. \cite{hurink2005tabu} analyze JSP with a single transportation resource by modeling transportation as an additional operation and applying tabu search in both a one-stage simultaneous scheduling approach and a two-stage approach in which robot scheduling follows machine scheduling.

When machine flexibility is incorporated alongside operation sequencing and vehicle assignment, the problem becomes the FJSP. In addition to introducing the first benchmark instances for FJSPT, \cite{deroussi2010simultaneous} propose an iterated local search metaheuristic that uses a discrete-event simulation model to evaluate the makespan. \cite{deroussi2014hybrid} introduces a hybrid metaheuristic that integrates particle swarm optimization with stochastic local search for the FJSPT. \cite{nouri2016simultaneous} develop a hybrid metaheuristic that integrates a neighborhood-based genetic algorithm with iterative local search to handle simultaneous scheduling with multiple vehicles. \cite{karimi2017scheduling} model transportation as fixed lags depending on operation-machine assignment. They proposed sequence-based and position-based MIP models, assuming an infinite number of transportation resources to eliminate any transportation-induced delays. \cite{yan2021research} address the simultaneous scheduling of the FJSP with transportation conditions using an improved genetic algorithm, encoding layers for operation sequencing, machine assignment, and transportation decisions. \cite{homayouni2021production} propose a MIP to solve small and medium-scale FJSPT instances to optimality. They develop a late acceptance hill-climbing heuristic to address larger instances. \cite{homayouni2023multistart} further improve performance on large-scale instances by introducing a multistart biased random key genetic algorithm. \cite{yao2024novel} extend the problem to a multi-objective setting that considers both the makespan and total energy consumption of transportation resources, and propose a knowledge-based evolutionary algorithm for joint scheduling of machines and vehicles.  
\cite{berterottiere2024flexible} design a tabu search metaheuristic with a large neighborhood move structure and a constant time move evaluation procedure to efficiently explore machine and vehicle scheduling within an extended disjunctive graph model. More recently, \cite{he2025hybrid} apply ant colony optimization, while \cite{yao2026knowledge} introduce a knowledge-enhanced discrete artificial bee colony algorithm to tackle simultaneous scheduling in the FJSPT.

Scalability remains a fundamental challenge in FJSPT modeling, particularly as problem instances increase in size and complexity. CP approaches have risen in popularity due to their efficiency in solving large-scale scheduling, routing, and resource allocation problems. It emphasizes constraint propagation and search rather than linear relaxations. Modern CP solvers integrate automated search procedures, providing optimal integer solutions or proving infeasibility otherwise \citep{naderi2023mixed}. Additionally, CP approaches outperform MIPs on large-scale FJSP instances \citep{ku2016mixed} by drastically reducing the search space via precedence constraints, thereby progressively narrowing the set of feasible solutions.

\cite{ham2020transfer} and \cite{berterottiere2024flexible} are the most pertinent papers to our research regarding a CP solution to FJSPT. \cite{ham2020transfer} proves optimality and supersedes exact methods for \cite{deroussi2010simultaneous} instances that were previously only heuristically solved using two distinct approaches. One approach designs separate pickup and dropoff interval variables to handle material transport, but this is outperformed by the second, which merges the pickup and dropoff variables into a single transfer variable handled by a single transbot. Comparable literature includes \cite{booth2016constraint}, which presents an approach to multi-robot task allocation and scheduling in retirement homes, and implements symmetry breaking, variable ordering heuristics, and large neighborhood search. Similarly, \cite{11339952} use a CP approach to enable operations that do not require transportation and is the first model to prove the optimality on all \cite{kumar2011simultaneous} instances.
 
Despite the growing body of work on flexible job shop scheduling with transportation resources, existing models typically assume identical transfer vehicles, unrestricted mobility, or simplified routing structures. In contrast, this paper introduces two CP formulations for the FJSPT that explicitly capture heterogeneous transbots, zoning restrictions, and handoff points within a single scheduling–routing framework. Beyond modeling expressiveness, the proposed formulations leverage global constraints to achieve substantial computational gains. This combination of efficiency and scalable solution techniques distinguishes the proposed FJSPT-H from prior approaches.

\section{Problem Description}\label{section:3}

Consider a shop floor comprised of a set of machines $\emph{m} \in \mathcal{M}$ that are non-sequentially laid out on the floor, and a set of transbots $\emph{v} \in \mathcal{V}$ responsible for transferring materials between machines during sequential operations (see Figure \ref{fig:layout}). There is a set of jobs $\mathcal{J}$ and operations $\mathcal{O}$, where each operation $o \in \mathcal{O}$ belongs to exactly one job $j_o \in \mathcal{J}$. The shop floor is partitioned into a set of zones $z \in \mathcal{Z}$. Each machine $m \in \mathcal{M}$ and each transbot $v \in \mathcal{V}$ is assigned to a unique zone, denoted by $z_m$ and $z_v$, respectively.

\begin{figure}[t!]
    \centering
    \includegraphics[width=0.85\textwidth]{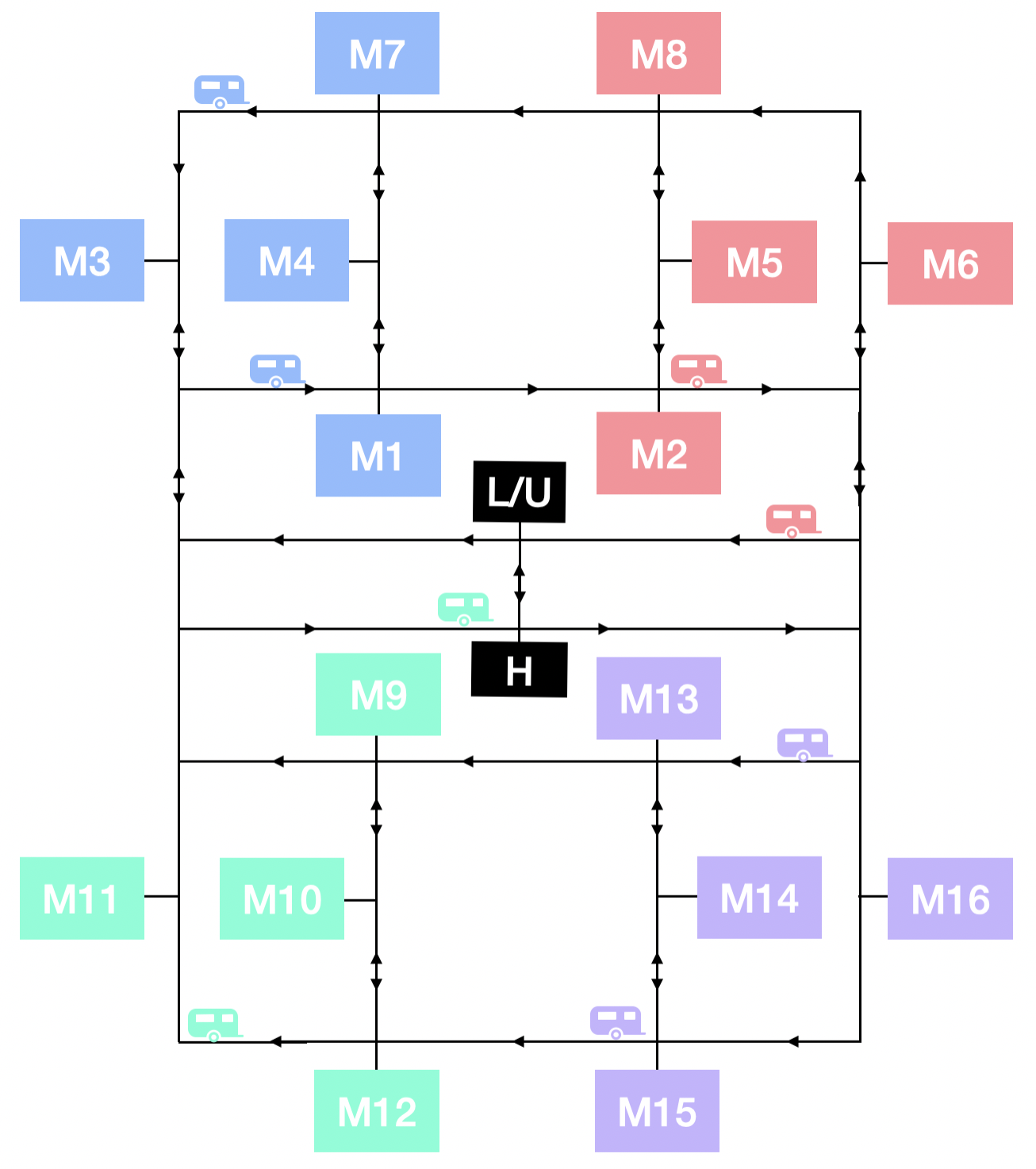}
    \caption{Layout of a shop floor with a stocker, a handoff point, and four zones, each with four machines and two transbots.}
    \label{fig:layout}
\end{figure}

Each operation $o \in \mathcal{O}$ is assigned an order index $g_o \in \mathbb{N}$ such that, for any two operations $o$ and $o'$ belonging to the same job, $g_o < g_{o'}$ if and only if $o$ must be completed before $o'$. An operation can be processed on one machine from a set of alternatives $\mathcal{M}_{o} \subseteq \mathcal{M}$. However, a machine can only execute one operation at a time. Depending on each pair of operation $o \in \mathcal{O}$ and machine $m \in \mathcal{M}$, there is a corresponding processing time $p_{om}\in\mathbb{R}$. All raw materials reside at the load/unload station, referred to as the stocker and denoted as $\emph{L/U}$. They must be transported to the machine before the first operation is executed for each job. Transbots are not required to return finished goods to the stocker after the final operation of a job; instead, the job is assumed to be completed at the machine where its last operation is processed.

\begin{table}[t!]
\centering
\begin{tabular}{p{0.2\textwidth}|p{0.75\textwidth}}
\toprule
 \textbf{Notation} & \textbf{Description} \\ 
 \midrule
 $m \in \mathcal{M}$ & machines \\
 \midrule
 $v \in \mathcal{V}$ & transbots \\
 \midrule
 $o \in \mathcal{O}$ & operations \\
 \midrule
 $j \in \mathcal{J}$ & jobs where $j_o$ is the job of operation $o$\\
 \midrule
 $z \in \mathcal{Z}$& zones\\
 \midrule
 $g_o \in \mathbb{N}$ & order index of operation $o$ in job $j_o$\\
 \midrule
 $M_o \subseteq M$ & set of machines eligible to process operation $o$ \\
 \midrule
 $p_{om} \in \mathbb{R}$ & processing time of operation $o$ on machine $m$\\
 \midrule
 $a \in \mathcal{A}$ &  set of all feasible arcs linking two machines $m$ and $m'$\\
 \midrule
 $l\in \mathcal{L}_{a}$ & legs composing arc $a \in \mathcal{A}$\\
 \midrule
 $k_l \in \{1,...,|\mathcal{L}_a|\}$ & index position of leg $l$ within arc $a$\\
 \midrule
 $i_l\in \mathbb{N}$ & unique identifier of leg $l$\\
 \midrule
 $T$ & travel times matrix \\
 \midrule
 $t \in \mathbb{R}$ & travel time with $t_a$ for arc $a$ and $t_l$ for leg $l$\\
 \midrule
 $L/U$ & stocker \\
 \midrule
 $H$ & handoff point\\
 \bottomrule
\end{tabular}
\caption{Problem Notation}
\label{notation}
\end{table}

Every operation induces a transfer task that is associated with a possible directed arc $a \in \mathcal{A}$, potentially composed of multiple legs, which abstracts the underlying path followed by a transbot between two machines $m$ and $m'$, where \(m\) and \(m'\) denote the pickup and drop-off machines, respectively. When the context requires it, pickup and dropoff machines of arc $a$ are denoted $m_a$ and $m'_a$. Travel times between machines are given in a matrix \(T\). Each arc $a \in \mathcal{A}$ is decomposed into an ordered set of legs $\mathcal{L}_a = \{ l_k : k = 1, \dots, |\mathcal{L}_a| \}$, where the index $k$ specifies the traversal order of the legs along arc $a$. If both machines belong to the same zone, then there is only one leg ($|\mathcal{L}_a| = 1$). This transfer will be handled by one of the transbots that matches the zone $z_m = z_v$. However, should the machines of the assigned arc belong to different zones, then two legs are required to complete the singular transfer, i.e., $\mathcal{L}_{a} = \{l_1, l_2\}$, and each leg is handled by its respective transbots, matched by zone. Denote by $z_l$ the zone corresponding to leg $l \in \mathcal{L}_a, \ a \in \mathcal{A}$. In this case, the first transbot must pick up the material from the initial station $m$ and travel to the handoff point $H$. The second transbot will retrieve the material from $H$ and deliver it to the machine $m'$, performing the next operation. The matrix $T$ includes both $H$ and $L/U$ as stations that do not perform operations. Each leg is also labeled with a unique index $i_{l} \in \mathcal{I}, \ l \in\mathcal{L}_{a}, \ a \in \mathcal{A}$. This notation inherently maps each leg to its parent arc. If two consecutive operations are processed on the same machine, i.e., \(m = m'\), no transportation is required, and the corresponding transfer task is empty, yielding \(|\mathcal{L}_a| = 0\). The problem notation is summarized in Table \ref{notation}.

This paper adheres to the same assumptions made regarding the FJSPT in \cite{bilge1995time}: (i) Machine operations and transbot transfers are non-preemptive and there is sufficient input and output (I/O) buffer space at each machine and $L/U$ stocker to avoid deadlocks, (ii) Processing, loading and unloading times are factored into all time-related variables, (iii) The number of transbots is known and they initially start from the stocker, and (iv) Transbots have unit-capacity and they move along predetermined shortest paths, with the assumption of no delay due to congestion and travel times on each segment of the path are known. 

\begin{table}[t!]
\centering
\begin{tabular}{p{0.1\textwidth}|p{0.85\textwidth}}
\toprule
 \textbf{Variable} & \textbf{Description} \\ 
 \midrule
 $x_o$ & transfer interval variable for operation $o$ \\
 \midrule
 $x_{oa}$ & optional interval variable representing the chosen arc $a$ for the transfer of operation $o$ \\
 \midrule
 $x_{olv}$ &  optional interval variable assigning transbot $v$ to each leg $l$ of arc $a$ for the transfer or operation $o$\\
 \midrule
 $y_o$ & operation interval variable of operation $o$\\
 \midrule
 $y_{m}$& optional interval variable representing the selection of machine $m$ for operation $o$\\
 \midrule
 $w_m$& sequence variable storing the ordering of operations $o$ on each machine $m$\\
 \midrule
 $w_{v}$ & sequence variable to track each machine a transbot $v$ visits by storing the unique identifier $i_{l}$ of each leg to access the matrix entry of $T$\\
 \bottomrule
\end{tabular}
\caption{Arc-based Formulation Decision Variables}
\label{arc-based notation}
\end{table}


\vspace{0.5cm}

\section{Arc-based Formulation}\label{section:4}

The FJSPT-H is formulated using CP, with the relevant CP functions and constructs described in Appendix \ref{A}. In addition to the indices, sets, and parameters presented in Section \ref{section:3}, Table \ref{arc-based notation} summarizes the decision variables. The durations of $x_o$ and $x_{oa}$ are the sum of the travel times on all the legs $l \in \mathcal{L}_a$ forming arc $a$, where $t_a = \sum_{l \in L_{a}} t_{l}$. The length of $x_{olv}$ is $t_l$. The length of $y_o$ and $y_{m}$ is the processing time of operation $o$ on machine $m$, i.e., $p_{om}$. The arc-based formulation is the following:

\begingroup
\allowdisplaybreaks
\begin{flalign}
& \min \; \max_{o \in \mathcal{O}}\; \mathtt{endOf}(y_{o})
\label{Objective}
&
\\[0.6em]
&
\mathtt{alternative}\!\left(
    y_o, \{y_{m}\}_{m \in \mathcal{M}_o} \right), \hspace{1mm} \forall o \in \mathcal{O}
\label{c1 - alternative(job,jobOnMach)}
&
\\[0.6em]
&
\mathtt{alternative}\!\left( x_o, \{x_{oa}\}_{a \in \mathcal{A}
}\right), \hspace{1mm} \forall o \in \mathcal{O}
\label{c2 - alternative(trans,transArc)}
&
\\[0.6em]
&
\mathtt{span}\!\left(
    x_{oa},\{x_{o l v}\}_{l \in \mathcal{L}_a,v \in \mathcal{V}}
\right),
\hspace{1mm} \forall o \in \mathcal{O},\ a\in \mathcal{A}
\label{c13 - sync duration}
&
\\[0.6em]
&
\mathtt{endBeforeStart}(x_o,\, y_o), \hspace{1mm} \forall o \in \mathcal{O}
\label{c3 - endBeforeStart(trans,job)}
&
\\[0.6em]
&
\mathtt{endBeforeStart}(y_o,\, x_{o'}), \hspace{1mm} \forall o,o' \in \mathcal{O}:\ j_o = j_{o'}, g_{o'} = g_o + 1
\label{c4 - endBeforeStart(job,transfer)}
&
\\[0.6em]
&
\mathtt{presenceOf}(x_{oa}) \hspace{-2mm} \implies \hspace{-2mm} \mathtt{presenceOf}(y_{m}), \hspace{0.5mm} \forall o \in \mathcal{O}, m \in \mathcal{M}_o, \ a \in \mathcal{A}:\ g_o = 1,\ m_a = L/U,\ m'_a=m 
\label{c5 - start at stocker}
&
\\[0.6em]
&
\mathtt{presenceOf}(x_{oa}) \hspace{-1mm} \implies \hspace{-1mm} \mathtt{presenceOf}(y_{m}), \hspace{0.5mm} \forall o \in \mathcal{O},\ m \in \mathcal{M}_o, a\in \mathcal{A}: g_o \neq 1, \ m'_a = m 
\label{c6 - link station 2}
&
\\[0.6em]
&
\mathtt{presenceOf}(x_{oa}) \hspace{-2mm} \implies \hspace{-2mm} \mathtt{presenceOf}(y_{m}), \hspace{0.5mm} \forall o',o \in \mathcal{O},\ m \in \mathcal{M}_{o'}, \ a \in \mathcal{A}: g_o \neq 1, g_{o'}=g_o-1, m_a=m 
\label{c7 - link station 1}
&
\\[0.6em]
&
\mathtt{presenceOf}(x_{oa})= \sum_{v \in \mathcal{V}: z_v = z_l} \mathtt{presenceOf}(x_{olv}), \hspace{1mm} \forall o \in \mathcal{O},\ l \in \mathcal{L}_a,\ a \in \mathcal{A}\ 
\label{c10 - assign vehicle}
&
\\[0.6em]
&
\sum_{v \in \mathcal{V}} \mathtt{presenceOf}(x_{o l v}) \le 1,
\forall o \in \mathcal{O},\ 
l \in \mathcal{L}_a,\ 
a \in \mathcal{A}\label{c11 - limit one vehicle to each arc}
&
\\[0.6em]
&
\mathtt{presenceOf}(x_{o l' v}) \hspace{-1mm} \implies \hspace{-1mm}
\mathtt{startOf}(x_{o l' v}) \ge \mathtt{endOf}(x_{olv}),
\forall o \in \mathcal{O},\ 
l \in \mathcal{L}_a,\ 
a \in \mathcal{A},\ 
v \in \mathcal{V}:\  \hspace{-2mm} 
k_{l'} = k_l +1 \label{c12 - sort precedence of legs}
&
\\[0.6em]
&
\mathtt{noOverlap}(w_v, T, \mathrm{false}), \hspace{1mm} \forall v \in \mathcal{V}
\label{c8 - noOverlap(vehicles)}
&
\\[0.6em]
&
\mathtt{noOverlap}(w_m), \hspace{1mm} \forall m \in \mathcal{M}
\label{c9 - oOverlap(machines)}
&
\end{flalign}
\endgroup

\noindent Objective \eqref{Objective} minimizes the makespan, i.e., total time required to complete all jobs, including the travel time for transbots transferring material between machines. Global \texttt{Alternative} Constraints \eqref{c1 - alternative(job,jobOnMach)} enforce that each operation interval in $y_o$ is assigned to exactly one machine interval from $y_{m}$ while synchronizing their start and end times. Similarly, Constraints \eqref{c2 - alternative(trans,transArc)} assign an appropriate pickup-dropoff machine arc to each transfer within $x_o$, synchronizing its time-span with the travel time between the machines of this activated arc in $x_{oa}$. The Global \texttt{span} Constraints \eqref{c13 - sync duration} synchronizes the time span of the transfer arc interval $x_{oa}$ with the duration of its corresponding legs $x_{olv}$. Before the start of the first operation in a job, the transbot must first retrieve the material from the stocker to deliver it to the first machine.  Therefore, transfers $x_o$ are always ordered before the execution of operations $y_o$ as delineated with the global \texttt{endBeforeStart} Constraints \eqref{c3 - endBeforeStart(trans,job)}. These constraints are reinforced by Constraints \eqref{c4 - endBeforeStart(job,transfer)}, which declare that the previous operation $y_o$ in a job must be completed before the transfer for the next operation $x_{o'}$ can begin. Additionally, Constraints \eqref{c5 - start at stocker} enforce that the initial transfers of each job begin at the stocker. Constraints \eqref{c6 - link station 2} and \eqref{c7 - link station 1} ensure that if a transfer along arc $a$ is selected for operation $o$, then operation $o$ must be executed on machine $m'_a$. The previous operation $o'$ should have been processed on machine $m_a$. Constraints \eqref{c10 - assign vehicle} ensure that a transbot is assigned to a specific leg of a transfer only if that transfer is active. The assignment respects zone restrictions, meaning the transbot must operate within the same zone as the leg. Constraints \eqref{c11 - limit one vehicle to each arc} prevent the assignment of redundant transbots to each leg. The precedence of the legs is defined in Constraints \eqref{c12 - sort precedence of legs}. Global \texttt{noOverlap} Constraints \eqref{c8 - noOverlap(vehicles)} and \eqref{c9 - oOverlap(machines)} prevent the intervals contained within the sequence variables from overlapping.

\vspace{0.5cm}

\begin{table}[H]
\centering
\begin{tabular}{p{0.1\textwidth}|p{0.85\textwidth}}
\toprule
\textbf{Variable} & \textbf{Description} \\ 
\midrule
 $x_o$ & transfer interval variable for operation $o$ \\
\midrule
 $x_{lv}$ & optional interval variable assigning a transbot $v$ to each leg $l \in \mathcal{L}_{a}$\\
 \hline
 $y_o$ & operation interval variable for processing of operation $o$\\
\midrule
 $y_{a}$& optional machine-pair interval variables embedding the processing of operation $o$ on the dropoff machine $m'_a$ and the pick-up machine $m_a$ on which the preceding operation $o'$ was processed\\
\midrule
 $w_m$& sequence variable storing the ordering of operations $o \in \mathcal{O}$ on machine $m$\\
\midrule
 $w_{v}$ & sequence variable tracking each machine a transbot $v$ visits by storing the unique index $i_{l}$ of each leg $l$ to access the matrix entry of $T$\\
\bottomrule
\end{tabular}
\caption{Operation-embedded Formulation Decision Variables}
\label{operation-embedded notation}
\end{table}

\newpage

\section{Operation-embedded Formulation} \label{section:5}

In addition to the indices, sets, and parameters presented in Section \ref{section:3}, Table \ref{operation-embedded notation} summarizes the decision variables. The operation-embedded formulation is the following:

\begingroup
\allowdisplaybreaks
\begin{flalign}
& \min \; \max_{o \in \mathcal{O}}\; \mathtt{endOf}(y_{o})
\label{m2Objective}
&
\\[0.6em]
&
\mathtt{alternative}\!\left(
    y_o, y_{a}\right), \hspace{1mm} \forall o \in \mathcal{O}, a\in A: \ m'_a \in {M}_{o}
\label{m2c1 - alternative(job,newJobOnMac)}
&
\\[0.6em]
&
\mathtt{span}\!\left(
    x_o,{x_{lv}}
\right),
\hspace{1mm} 
\forall o \in \mathcal{O},
\ l \in {L}_{a},
\ a \in \mathcal{A},
\ v \in V: m'_a \in \mathcal{M}_o
\label{m2c2 - sync duration}
&
\\[0.6em]
&
\mathtt{endBeforeStart}(x_o,\, y_o), \hspace{1mm} \forall o \in \mathcal{O}
\label{m2c4 - endBeforeStart(trans,job)}
&
\\[0.6em]
&
\mathtt{endBeforeStart}(y_o,\, x_{o'}), \hspace{1mm} \forall o,o' \in \mathcal{O}:\ j_o = j_{o'}, \ g_{o'} = g_o + 1
\label{m2c5 - endBeforeStart(job,transfer)}
&
\\[0.6em]
&
\mathtt{presenceOf}(y_{o'a}) = 
\sum_{a \in \mathcal{A}} \mathtt{presenceOf}(y_{a}),\hspace{1mm}
\forall o,o' \in \mathcal{O},\ 
a \in \mathcal{A}: 
j_o = j_{o'},
g_{o} = g_{o'} - 1, 
\ g_{o'} \ne 1, \ 
m'_a \in \mathcal{M}_{o'} 
\label{m2c6 - station linking}
&
\\[0.6em]
&
\mathtt{presenceOf}(y_{a}) =
\sum_{v \in \mathcal{V}: z_v = z_l} \mathtt{presenceOf}(x_{lv}), \ 
\forall o \in \mathcal{O},\ 
l \in {L}_{a}, \ 
a \in \mathcal{A}: m'_a \in \mathcal{M}_o
\label{m2c7 - assign vehicle}
&
\\[0.6em]
&
\mathtt{presenceOf}(y_{a}) =
\sum_{v \in \mathcal{V}} \mathtt{presenceOf}(x_{lv})\ 
\forall o \in \mathcal{O},\ 
l \in {L}_{a}, \ 
a \in \mathcal{A}: m'_a \in \mathcal{M}_o
\label{m2c8 - restrict other vehicles}
&
\\[0.6em]
&
\mathtt{presenceOf}(x_{l'v'}) \hspace{-2mm} \implies \hspace{-2mm}
\mathtt{startOf}(x_{l'v'}) \ge \mathtt{endOf}(x_{lv}), \hspace{1mm} 
\forall o \in \mathcal{O},\ 
l \in \mathcal{L}_{a},\ 
a \in \mathcal{A},\ 
v,v' \in \mathcal{V}:\ 
m'_a \in \mathcal{M}_o,\
k_{l'} = k_l+1 
\label{M2c9 - sort precedence of legs}
&
\\[0.6em]
&
\mathtt{noOverlap}(w_v, T, \mathrm{false}), \hspace{1mm} \forall v \in \mathcal{V}
\label{m2c10 - noOverlap(vehicles)}
&
\\[0.6em]
&
\mathtt{noOverlap}(w_m), \hspace{1mm} \forall m \in \mathcal{M}
\label{m2c11 - noOverlap(machines)}
&
\end{flalign}
\endgroup

\noindent Objective \eqref{m2Objective} minimizes the makespan. Constraints \eqref{m2c1 - alternative(job,newJobOnMac)} enforce that operation $o$ is executed on its selected drop-off machine $m'_a$ by activating a feasible machine arc pair $y_{a}$, $\forall a \in \mathcal{A}_o$. The duration of $y_o$ is the corresponding processing time of operation $o$ on machine $m=m'_a$, i.e., $p_{om}$. Constraints \eqref{m2c2 - sync duration} synchronize the transfer interval $x_o$ with its associated legs $x_{lv}$, ensuring that $x_o$ starts with the first active leg and ends upon completion of the last (single-leg case included). The sequence of transfers and operation processing is enforced by Constraints \eqref{m2c4 - endBeforeStart(trans,job)} and \eqref{m2c5 - endBeforeStart(job,transfer)}. To ensure proper station linking for operations within the same job, Constraints \eqref{m2c6 - station linking} ensure that the pickup machine of operation $o$ matches the drop-off machine that processed the preceding operation $o'$. The first operation of each job ($g_o = 1$) is exempt because the input arc data specifies the stocker as the pickup machine for the first operation. Constraints \eqref{m2c7 - assign vehicle} and \eqref{m2c8 - restrict other vehicles} work in conjunction to ensure that each leg of an arc is assigned exactly one zone-compatible transbot. When multiple legs are present, their precedence is enforced by Constraints \eqref{M2c9 - sort precedence of legs}. Constraints \eqref{m2c10 - noOverlap(vehicles)} and \eqref{m2c11 - noOverlap(machines)} prevent the intervals contained within the sequence variables from overlapping.

\section{Comparison and Methodology} \label{section:6}

This section compares the two formulations and discusses the solution methodology.

\subsection{Formulation Comparison}\label{section:comparison}

\noindent Both the arc-based and operation-embedded formulations fully capture the integrated scheduling and routing requirements of the FJSPT-H. While the two formulations are equivalent in terms of feasible schedules and optimal makespan, they differ substantially in how decisions are represented and propagated. In particular, the operation-embedded formulation reorganizes routing and processing decisions into a tighter variable structure, leading to a more compact and propagation-friendly representation in practice.

\noindent \textbf{Visual Comparison.} Figure \ref{fig:visual_comparison} visualizes the differences in propagation and interval linking from a modeling perspective. For the arc-based formulation, transfers are first chosen at the arc level ($x_{oa}$), which then determines the execution machine ($y_{m}$), and induces the leg-robot assignments ($x_{olv}$), as shown in Figure \ref{fig:cp3_sched}. Many of these optional intervals correspond to structurally infeasible machine–arc combinations and must be eliminated dynamically through propagation.

\begin{figure}[t!]
 \begin{subfigure}{\textwidth}
     \centering
     \includegraphics[width=0.8\textwidth]{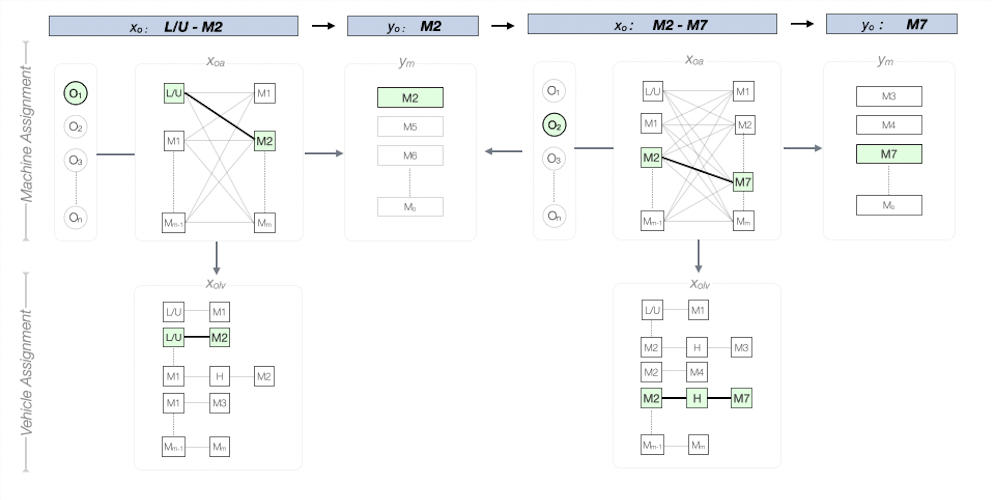}
     \caption{Arc-based Formulation}
     \label{fig:cp3_sched}
 \end{subfigure}
 \hfill
  \begin{subfigure}{\textwidth}
     \centering
     \includegraphics[width=0.8\textwidth]{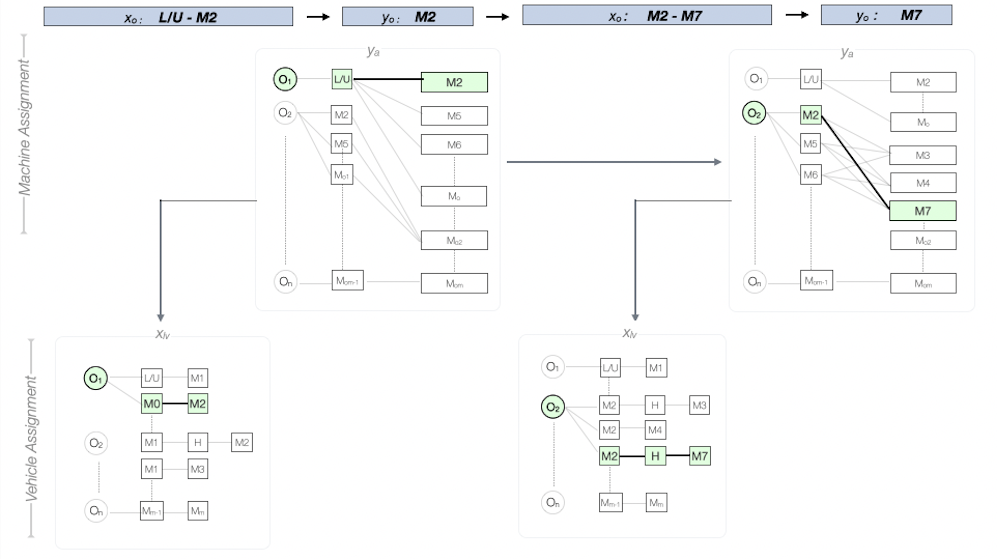}
     \caption{Operation-embedded Formulation}
     \label{fig:cp4_sched}
 \end{subfigure}
\caption{Visual comparison between the two formulations.}
\label{fig:visual_comparison}
\end{figure}

The operation-embedded formulation substantially improves propagation by introducing operation-embedded arcs and legs, as shown in Figure \ref{fig:cp4_sched}. Through aggressive preprocessing, the domain excludes impossible combinations \textit{a priori} by only mapping each operation to its viable arcs ($y_{a}$) and their associated legs ($x_{lv}: l \in \mathcal{L}_{a}$), reducing complexity in the decision space. Viable arcs for operation $o$ are defined as the set of all machine pairs formed by combining the feasible processing machines of the preceding operation $o'$ (pick-up machine $m_a\in M_{o'}$) with the series of processing machines of the current operation $o$ (dropoff machine $m'_a \in M_{o}$). The consecutive operations must belong to the same job ($j_o = j_{o'}$). This shift not only decreases the number of optional intervals but also ensures the remaining variables are prevalidated and feasible modeling choices. 

The modeling choice of decision variables in the operation-embedded formulation enables tighter and more effective constraint formulations. By leveraging preprocessed operation-embedded arcs, $y_{a}$ simultaneously governs machine assignment within the transfer selection without overconstraining the model, where the pickup machine $m$ initiates the transfer and dropoff machine $m'$ executes the operation. As a result, the operation-embedded formulation substantially reduces reliance on implication-based constraints used in the arc-based formulation for station linking. This can be understood by comparing Constraints \eqref{m2c6 - station linking} to Constraints \eqref{c6 - link station 2} and \eqref{c7 - link station 1}. This avoids conditional reasoning over a large search space, known to weaken propagation. Additionally, this preprocessing establishes more informative domains by incorporating static conditions. For example, the first operation for each job is always restricted to being picked up from the stocker, enabling transbots to serve subsets of machines while adhering to the need to encode this condition explicitly in the constraints, as is done in the arc-based formulation (see Constraints \eqref{c5 - start at stocker}). 

The arc-based formulation accommodates real-world manufacturing settings, such as incompatibilities arising from different transbot and machine vendors or facility layouts, by establishing zones and enabling handoff transfers between them. Transbot feasibility is therefore modeled at the machine level, allowing transbots to serve subsets of machines while respecting cross-zone transfer logic. Both formulations support machine- and zone-based transbot feasibility. However, the operation-embedded formulation incorporates this heterogeneity directly into operation-level arc variables, allowing routing feasibility to be filtered during preprocessing rather than via implication constraints. In a manufacturing context, this enables transbots to be customized to varying load capacities or handling requirements. This granularity provides a more compact, propagation-oriented representation of shop-floor operations without compromising modeling expressiveness.

\begin{table}[t!]
  \centering
  \caption{Comparison of Model Decision Variables}
    \begin{tabular}{p{0.12\textwidth}|>{\centering\arraybackslash}p{0.13\textwidth}|>{\centering\arraybackslash}p{0.23\textwidth}|>{\centering\arraybackslash}p{0.13\textwidth}|>{\centering\arraybackslash}p{0.23\textwidth}}
    \toprule
    \multicolumn{1}{c|}{\multirow{2}{*}{\textbf{Type}}} 
    & \multicolumn{2}{c|}{\textbf{Arc-Based}} 
    & \multicolumn{2}{c}{\textbf{Operation-Embedded}} \\
\cmidrule{2-5}
    & \textbf{Variable} & \textbf{Domain} 
    & \textbf{Variable} & \textbf{Domain} \\
    \midrule
    
    \textit{transfer} 
    & $x_o$ 
    & $|\mathcal{O}|$ 
    & $x_o$ 
    & $|\mathcal{O}|$ \\
    
    \midrule
    
    \textit{transArc} 
    & $x_{oa}$ 
    & $\displaystyle \sum_{o \in \mathcal{O}} |\mathcal{A}_o|$ 
    & -- 
    & -- \\
    
    \midrule
    
    \textit{transOnVeh} 
    & $x_{olv}$ 
    & $\displaystyle \sum_{o \in \mathcal{O}} \sum_{l \in \mathcal{L}_o} |\mathcal{V}_{z_l}|$ 
    & $x_{lv}$ 
    & $\displaystyle \sum_{l \in \mathcal{L}} |\mathcal{V}_{z_l}|$ \\
    
    \midrule    
    
    \textit{job} 
    & $y_o$ 
    & $|\mathcal{O}|$ 
    & $y_o$ 
    & $|\mathcal{O}|$ \\
    
    \midrule
    
    \textit{jobOnMach} 
    & $y_{m}$ 
    & $\displaystyle \sum_{o \in \mathcal{O}} |\mathcal{M}_o|$ 
    & $y_{a}$ 
    & $\displaystyle \sum_{o \in \mathcal{O}} |\mathcal{A}_o|$ \\
    
    \midrule
    
    \textit{seqMach} 
    & $w_m$ 
    & $|\mathcal{M}|$ 
    & $w_m$ 
    & $|\mathcal{M}|$ \\
    
    \midrule
    
    \textit{seqVeh} 
    & $w_v$ 
    & $|\mathcal{V}|$ 
    & $w_v$ 
    & $|\mathcal{V}|$ \\
    
    \bottomrule
    \end{tabular}
  \label{tab:comparison_variables}
\end{table}

\noindent \textbf{Variable Comparison.} Table \ref{tab:comparison_variables} compares the variables involved in each formulation and highlights the structural differences between the two formulations (nomenclature in Type is taken from \cite{ham2020transfer}). Both models share the same number of transfer, operation, and sequencing variables. However, they differ in how transportation and machine assignment decisions are represented. In the arc-based formulation, transfer decisions are decomposed into arc selection variables $x_{oa}$ and leg-level transbot assignment variables $x_{olv}$. As a result, the number of transbot-assignment variables scales with $\sum_{o \in \mathcal{O}} \sum_{l \in \mathcal{L}_o} |\mathcal{V}_{z_l}|$, which explicitly depends on the number of legs induced by each operation. In contrast, the operation-embedded formulation eliminates explicit arc-selection variables and defines transbot-assignment variables directly at the leg level ($x_{lv}$), leading to a flatter structure with cardinality $\sum_{l \in \mathcal{L}} |\mathcal{V}_{z_l}|$. Similarly, machine-assignment decisions are embedded within arc-level variables $y_a$, whose number equals the feasible arcs per operation. Overall, the arc-based formulation provides a more explicit representation of routing choices, while the operation-embedded formulation integrates routing and processing decisions more tightly. This structural distinction affects model size and propagation behavior.

\noindent \textbf{Asymptotic Size Comparison.} Let $|\mathcal{O}|$ denote the number of operations, $|\mathcal{M}|$ the number of machines, $|\mathcal{V}|$ the number of transbots, and $|\mathcal{Z}|$ be the number of zones. Assume each zone contains on average $|\mathcal{V}|/|\mathcal{Z}|$ transbots. In the arc-based formulation, the number of transbot-assignment variables is
\[
\sum_{o \in \mathcal{O}} \sum_{l \in \mathcal{L}_o} |\mathcal{V}_{z_l}|.
\]
Since each operation induces at most two legs, i.e., $|\mathcal{L}_o| \le 2$, the total number of such variables is bounded by
\[
O\!\left(|\mathcal{O}| \cdot \frac{|\mathcal{V}|}{|\mathcal{Z}|}\right),
\]
under uniform zone distribution. Additionally, arc-selection variables scale as $\sum_{o \in \mathcal{O}} |\mathcal{A}_o|$, which in the worst case is $O(|\mathcal{O}| \cdot |\mathcal{M}|^2)$, though typically much smaller due to routing and zoning restrictions. In the operation-embedded formulation, transbot-assignment variables scale as
\[
\sum_{l \in \mathcal{L}} |\mathcal{V}_{z_l}|,
\]
which is also bounded by 
\[
O\!\left(|\mathcal{O}| \cdot \frac{|\mathcal{V}|}{|\mathcal{Z}|}\right),
\]
since $|\mathcal{L}| \le 2|\mathcal{O}|$. However, machine-assignment variables $y_a$ grow with $\sum_{o \in \mathcal{O}} |\mathcal{A}_o|$, embedding routing and processing decisions into a single layer. Therefore, both formulations scale linearly in the number of operations with respect to transportation variables, while machine-assignment variables may scale quadratically in machine flexibility through $|\mathcal{A}_o|$. The main structural difference lies in the explicit presence of arc-selection variables in the arc-based formulation. This distinction affects the memory footprint and the propagation structure rather than the asymptotic order. The two formulations produce the same makespan, as shown below.

\begin{prop}\label{equivalence}
The arc-based and operation-embedded formulations produce the same optimal makespan.  
\end{prop}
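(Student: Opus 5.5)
We prove the statement by exhibiting two makespan-preserving maps between feasible solutions. One map sends every feasible solution of the arc-based formulation \eqref{Objective}--\eqref{c9 - oOverlap(machines)} to a feasible solution of the operation-embedded formulation \eqref{m2Objective}--\eqref{m2c11 - noOverlap(machines)} with identical start and end times of every $y_o$. The other map goes in the reverse direction. Since both objectives are $\max_{o}\mathtt{endOf}(y_o)$, each optimal value is then at most the other, so they coincide. Infeasibility of one model also transfers to the other.

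\textbf{Arc-based to operation-embedded.} Take a feasible solution $(x_o,x_{oa},x_{olv},y_o,y_m,w_m,w_v)$.
\begin{enumerate}
\item For each $o$, let $a^\ast(o)$ be the unique present arc given by \eqref{c2 - alternative(trans,transArc)}.
\item By \eqref{c5 - start at stocker}--\eqref{c7 - link station 1}, $m'_{a^\ast(o)}$ is the machine selected for $o$, and $m_{a^\ast(o)}$ is the machine of the predecessor $o'$, or $L/U$ if $g_o=1$.
\item Hence $a^\ast(o)$ belongs to the preprocessed set $\mathcal{A}_o$ of viable arcs.
\item Set $y_{a^\ast(o)}$ present with the same interval as $y_o$. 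Set $x_{lv}:=x_{o l v}$ for the legs of $a^\ast(o)$, and keep $x_o$, $y_o$, $w_m$, $w_v$ unchanged.
\end{enumerate}
Then we verify the constraints one by one:
\begin{itemize}
\item \eqref{m2c1 - alternative(job,newJobOnMac)} holds by construction.
\item \eqref{m2c2 - sync duration} follows from \eqref{c13 - sync duration} together with \eqref{c2 - alternative(trans,transArc)}.
\item \eqref{m2c4 - endBeforeStart(trans,job)} and \eqref{m2c5 - endBeforeStart(job,transfer)} are literally \eqref{c3 - endBeforeStart(trans,job)} and \eqref{c4 - endBeforeStart(job,transfer)}.
\item \eqref{m2c6 - station linking} follows from step 2 above.
\item \eqref{m2c7 - assign vehicle} and \eqref{m2c8 - restrict other vehicles} follow from \eqref{c10 - assign vehicle} and \eqref{c11 - limit one vehicle to each arc}. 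Note that \eqref{c10 - assign vehicle} already forces every present $x_{olv}$ to satisfy $z_v=z_l$.
\item \eqref{M2c9 - sort precedence of legs} follows from \eqref{c12 - sort precedence of legs}.
\item The noOverlap constraints are unchanged, because the same intervals, carrying the same leg identifiers $i_l$, populate $w_v$ and $w_m$.
\end{itemize}

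\textbf{Operation-embedded to arc-based.} Reverse the construction.
\begin{enumerate}
\item For each $o$, let $a$ be the present $y_a$.
\item Set $x_{oa}$ present with the interval of $x_o$, and set $y_m$ present for $m=m'_a$ with the interval of $y_o$.
\item Set $x_{olv}:=x_{lv}$.
\end{enumerate}
Constraint \eqref{c13 - sync duration} holds because the span of the legs equals $x_o$ by \eqref{m2c2 - sync duration}, and its duration is $t_a$. The implication constraints \eqref{c5 - start at stocker}--\eqref{c7 - link station 1} follow from two facts. First, $y_a$ fixes the drop-off machine. Second, the pickup machine equals the drop-off machine of the predecessor, by \eqref{m2c6 - station linking} and by the preprocessing convention that $m_a=L/U$ when $g_o=1$. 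All remaining constraints translate directly, as in the first direction.

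\textbf{Expected obstacle.} The main difficulty is rigor about what each formulation actually constrains, rather than any calculation. Specifically:
\begin{itemize}
\item the index sets in \eqref{m2c6 - station linking}--\eqref{M2c9 - sort precedence of legs} are stated loosely;
\item the operation-embedded formulation relies on the preprocessing of $\mathcal{A}_o$ to encode the stocker and zone conditions;
\item one must check that the transbot sequences $w_v$ contain exactly the present leg intervals in both models, so that the setup times from $T$ in \eqref{c8 - noOverlap(vehicles)} and \eqref{m2c10 - noOverlap(vehicles)} coincide.
\end{itemize}
I would handle this by first fixing a common interpretation. Under it, $\mathcal{A}_o$ is exactly the set of arcs whose pickup machine is the stocker (if $g_o=1$) or a machine in $\mathcal{M}_{o'}$, and whose drop-off machine is in $\mathcal{M}_o$; in addition, each present leg is assigned to exactly one transbot of its zone. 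With this interpretation fixed, I would state a lemma that both models define the same set of \emph{schedules}. A schedule here means the machine choice, arc choice, leg-to-transbot assignment, and all start times. The makespan equality is then immediate from that lemma.
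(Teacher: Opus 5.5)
Your proposal is correct and follows essentially the same route as the paper's proof. Both build a bidirectional, timing-preserving map between feasible solutions: drop the arc-selection layer $x_{oa}$ in one direction, and rebuild it from the selected pickup/drop-off machine pair in the other. This gives $\mathcal{F}^{\text{Arc}}=\mathcal{F}^{\text{Emb}}$, and your constraint-by-constraint check is a more explicit version of the paper's Steps 1--4.

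As in the paper, your checks of \eqref{M2c9 - sort precedence of legs} and \eqref{c13 - sync duration} rely on the intended rather than literal reading of the models. The paper's Step 3 simply asserts ordered legs and identical travel times in both formulations. Two assumptions are needed:
\begin{itemize}
\item \eqref{c12 - sort precedence of legs} must order consecutive legs served by \emph{different} transbots. As written it uses the same $v$ on both sides, which is vacuous for inter-zone legs.
\item The embedded transfer $x_o$ must last exactly $t_a$, i.e.\ no waiting at $H$, to match the fixed-length $x_{oa}$.
\end{itemize}
You should add both points to the common interpretation you propose to fix.
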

\begin{proof}
Both formulations minimize the same objective function
\[
\min \max_{o \in \mathcal{O}} \mathtt{endOf}(y_o),
\]
where $y_o$ denotes the execution interval of operation $o$. 
It therefore suffices to prove that the two formulations induce identical feasible regions.

\medskip
\noindent
\textit{Step 1: Processing equivalence.}
In both formulations, each operation $o \in \mathcal{O}$ selects exactly one feasible machine from $\mathcal{M}_o$. The execution interval $y_o$ is assigned to the selected machine and has a duration equal to the corresponding processing time $p_{om}$. Although machine assignment is encoded differently (via $y_m$ in the arc-based model and via $y_a$ in the operation-embedded model), the induced execution interval $y_o$ has identical machine assignment and duration in both formulations. Thus, the processing component of any feasible schedule is identical.

\medskip
\noindent
\textit{Step 2: Transfer structure equivalence.}
For each operation, transportation between consecutive machines is determined by the selected pickup and drop-off machines. In the arc-based formulation, a transfer is represented by a selected arc $a$ that decomposes into an ordered set of legs $\mathcal{L}_a$, with $|\mathcal{L}_a| \in \{0,1,2\}$ depending on zoning. In the special case where consecutive operations are processed on the same machine, no transfer leg is generated in either formulation. In the operation-embedded formulation, the same leg structure is represented directly, without an explicit arc-selection layer.

For any pair of selected machines $(m,m')$ that is feasible under the routing and zoning rules, the induced leg set and associated travel times are identical in both formulations, since both rely on the same underlying travel-time matrix and zoning structure.

\medskip
\noindent
\textit{Step 3: Resource and temporal consistency.}
Both formulations enforce:
(i) precedence between transfer and processing intervals,
(ii) sequencing constraints on machines,
(iii) sequencing constraints on transbots,
and (iv) ordered execution of legs for inter-zone transfers.
These constraints impose identical temporal and resource-feasibility conditions on the set of intervals.

\medskip
\noindent
\textit{Step 4: Bidirectional mapping of feasible solutions.}
Let $\mathcal{F}^{\text{Arc}}$ and $\mathcal{F}^{\text{Emb}}$ denote the feasible schedule sets of the arc-based and operation-embedded formulations, respectively.

\smallskip
\noindent
($\subseteq$) Consider any feasible schedule in $\mathcal{F}^{\text{Arc}}$. Removing the auxiliary arc-selection variables while retaining the selected machines, leg assignments, start times, and durations yields a schedule that satisfies all constraints of the operation-embedded formulation. Hence, $\mathcal{F}^{\text{Arc}} \subseteq \mathcal{F}^{\text{Emb}}$.

\smallskip
\noindent
($\supseteq$) Conversely, consider any feasible schedule in $\mathcal{F}^{\text{Emb}}$. The selected machine pair for each operation uniquely determines a corresponding arc consistent with zoning and routing logic. Introducing the associated arc-selection variables produces a schedule that satisfies all arc-based constraints without altering any timing decisions. Hence, $\mathcal{F}^{\text{Emb}} \subseteq \mathcal{F}^{\text{Arc}}$.

\medskip

Since $\mathcal{F}^{\text{Arc}} = \mathcal{F}^{\text{Emb}}$, the two formulations define identical feasible regions and therefore share the same optimal objective value.
\end{proof}

Although the two formulations are theoretically equivalent with respect to feasible regions and optimal objective values, their internal search trees may differ substantially due to differences in propagation strength and domain filtering. This distinction is central to computational performance and motivates the experimental comparison in Section \ref{section:7}.

\subsection{Solution Methodology}\label{section:methodology}

Each arc represents a material transfer from a pickup machine $m$ to a drop-off machine $m'$. Recall that, to incorporate zoning, arcs are decomposed into one or two legs depending on the zones of their endpoints:

\allowdisplaybreaks
\begin{itemize}
    \item \textbf{Different zones ($z_m \neq z_{m'}$):} the arc is split into two legs $l_1, l_2 \in \mathcal{L}_a$ such that:
    \[
    \begin{aligned}
        l_1 &: m_{l_1} = m, \quad m^{'}_{l_1} = H, \quad z_{v_{l_1}} = z_m,\\
        l_2 &: m_{l_2} = H, \quad m^{'}_{l_2} = m', \quad z_{v_{l_2}} = z_{m'},
    \end{aligned}
    \]
    where $v_{l_k}$ denotes the transbot assigned to leg $l_k$.
    
    \item \textbf{Same zone ($z_m = z_{m'}$):} the arc consists of a single leg $l \in \mathcal{L}_a$:
    \[
         m_{l} = m, \quad m^{'}_{l} = m', \quad z_{v_l} = z_m.
    \]
\end{itemize}

From a modeling perspective, this decomposition ensures that each transbot is assigned only to legs within its accessible zone and that material transfers between zones occur via a designated handoff point, thereby preventing routing conflicts and ensuring exclusive leg occupancy. From a practical perspective, a machine executes a single operation at a time, implying no two operations share the same arc at a given timestamp. 

The movement of each transbot must be coordinated to avoid conflicts over shared legs of the transportation network. To achieve this, the paper adopts a book-and-release strategy within the CP models. When a transbot is scheduled to traverse a leg $l \in \mathcal{L}_a, \ a \in \mathcal{A}$, the corresponding interval variable books exclusive use of that leg for the duration of the traversal. This prevents other transbots from accessing the same leg during the booked interval.  Once the traversal is completed, the leg is released and becomes available to other transbots. The strategy is naturally enforced using the \texttt{noOverlap} Constraints \eqref{c8 - noOverlap(vehicles)} and \eqref{m2c10 - noOverlap(vehicles)} on leg resources, ensuring collision-free routing without explicitly modeling continuous transbot movement.

Since each identifier $i_{l} \in \mathcal{I}, \ l \in\mathcal{L}_{a}, \ a \in \mathcal{A}$ maps to the paired pickup and drop-off machines for a leg (including the handoff point), each matrix $T$ entry represents the travel time from the drop-off machine associated with the row index to the pickup machine associated with the column index. This ensures that the time difference between consecutive transfer intervals reflects the actual travel time required for the transbot to move from the drop-off of one leg to the pickup of the next, rather than the transfer interval duration itself. Storing indices enables the model to retrieve the appropriate transfer time for each consecutive movement. This modeling approach provides a discrete-time abstraction of continuous vehicle movement while preserving routing feasibility and travel-time accuracy.

\section{Computational Results}\label{section:7}

This section presents the experimental design, computational performance, and sensitivity analysis. The section concludes with a discussion of the managerial insights derived from the results, with an emphasis on the practical benefits.

\subsection{Experimental design}

The CP formulations are implemented using IBM CP Optimizer. All computations are performed on a 40-core machine with 384 GB of RAM, running Oracle Linux Server 7.7 on a 3.20 GHz Intel® Core™ i7-8700 processor. For evaluation, real-time runtime is reported to assess computational performance across all test instances. A maximum runtime of 600 seconds was applied to all computational runs. This section describes the test instances and acceleration strategies. 

\noindent \textbf{Test Instances.} The approaches in this paper are evaluated on two instance classes: small (S) and medium (M). The small-scale instances are taken from \cite{deroussi2010simultaneous}, which extend the classical \cite{bilge1995time} benchmark by adding machines to suit settings with transbots. For medium-scale experiments, the well-known \emph{HUdata} benchmark \citep{hurink1994tabu} is used. This benchmark includes four families of flexible job shop instances (\emph{sdata}, \emph{edata}, \emph{rdata}, \emph{vdata}) characterized by varying levels of routing flexibility. \textit{sdata} models cases with no flexibility, representing the classical JSP case, \textit{edata} models cases with partial flexibility where few operations can be assigned to multiple machines, \textit{rdata} models those where multiple operations can be assigned to multiple machines, and \textit{vdata} reflects full flexibility with $|\mathcal{M}|/2$ available machines on average per operation. Transfer times are generated from \textit{Layout 1} of \cite{ham2020transfer}, where times are uniformly distributed from [20,40]. The considered benchmark instances are adapted to incorporate zoning and to mimic the industrial case study that inspired this research. The instance generation is highlighted in Appendix \ref{B}.

\noindent \textbf{Acceleration Strategies.} As a first acceleration strategy, the problem structure is exploited by initially solving the classical FJSP. This relaxation ignores zoning, routing, and transfer operations, and therefore provides a computationally inexpensive surrogate of the FJSPT-H. The objective value constitutes a valid lower bound for the FJSPT-H, since introducing transbot movements can only increase the makespan. 

Beyond warm-starting, further computational gains can be obtained by carefully tuning CP solver parameters. In particular, the number of workers controls parallel exploration of the search tree, allowing multiple threads to evaluate independent branches simultaneously. Theoretically, increasing the number of workers can accelerate the discovery of high-quality solutions by covering a larger portion of the solution space in parallel. Still, it also introduces coordination and memory overhead. Empirically, for larger or highly constrained instances of FJSPT-H, increasing the number of workers reduces solving time and improves convergence to optimal or near-optimal solutions. Adjusting this parameter balances extraction time and engine computation, thereby improving solver efficiency without modifying the model itself.

\subsection{Computational Performance}

To measure the performance of our model, the following methods are compared:

\begin{itemize}
    \item \textbf{CP2-H}: The FJSPT model from \cite{ham2020transfer}.
    \item \textbf{CP2-H$^{+}$}: The FJSPT-H without zoning, without handoff points, and with homogeneous transbots. CP2-H$^{+}$ is feature-equivalent to CP2-H but adopts an arc-based formulation.
    \item \textbf{CP2-H$^{++}$}: The FJSPT-H without zoning, without handoff points, and with homogeneous transbots. CP2-H$^{++}$ is feature-equivalent to CP2-H, but it adopts the operation-embedded formulation.
    \item \textbf{CP3}: The arc-based formulation for FJSPT-H proposed in this paper.
    \item \textbf{CP4}: The operation-embedded formulation for FJSPT-H proposed in this paper.
\end{itemize}

For each formulation, the resulting makespan (CMAX) and the execution time (Time) in seconds are reported. Table \ref{tab:Results on Small Instances} compares the performance of CP2-H, CP2-H$^{+}$, CP2-H$^{++}$, CP3, and CP4 across the ten small FJSPT instances under the configuration: two zones, two transbots. 

The results on the small instances show a clear and consistent advantage for the models built on our new formulations. CP2-H is significantly slower, with an average runtime of 28.15s, more than an order of magnitude higher than that of the strengthened variants. In contrast, CP2-H$^{+}$ and CP2-H$^{++}$ solve the same feature-equivalent problem in only 1.74s and 1.57s on average, respectively. The improvement is particularly evident in instances such as FJSP7, where the runtime drops from 251.20s for CP2-H to only a few seconds for the alternative formulations. This demonstrates that tighter representations, achieved through additional global constraints and a more structured modeling of transfers, substantially improve propagation and search efficiency, independently of zoning or handoff features.

In terms of solution quality, the models yield comparable makespans. When the full FJSPT-H features are activated, CP3 and CP4 exhibit very similar computational behavior, with average runtimes of 1.41s and 1.28s, respectively. The comparable performance of these models indicates that incorporating zoning, handoff points, and heterogeneous transbots does not significantly increase the computational complexity at this scale. Overall, these results confirm that the formulation itself is the main driver of the observed computational gains.

\begin{table}[t!]
  \centering
  \caption{Results on Small Instances}
    \begin{tabular}{l|rr|rr|rr|rr|rr}
    \toprule
    \multirow{2}[3]{*}{Instance} & \multicolumn{2}{c|}{CP2-H} & \multicolumn{2}{c|}{CP2-H$^+$} & \multicolumn{2}{c|}{CP2-H$^{++}$} & \multicolumn{2}{c|}{CP3} & \multicolumn{2}{c}{CP4} \\
\cmidrule{2-11}          & CMAX  & Time  & CMAX  & Time  & CMAX  & Time  & CMAX  & Time  & CMAX  & Time \\
    \midrule
    FJSP1 & 134   & 5.30  & 134   & 1.24  & 134   & 1.11  & 144   & 1.79  & 144   & 1.27 \\
    FJSP2 & 114   & 5.00  & 116   & 1.22  & 116   & 1.24  & 120   & 0.60  & 120   & 0.72 \\
    FJSP3 & 120   & 1.10  & 120   & 0.18  & 120   & 0.06  & 132   & 0.46  & 132   & 0.53 \\
    FJSP4 & 114   & 5.40  & 114   & 2.46  & 114   & 1.90  & 130   & 1.31  & 130   & 0.81 \\
    FJSP5 & 94    & 0.20  & 90    & 0.15  & 90    & 0.05  & 98    & 0.52  & 98    & 0.34 \\
    FJSP6 & 138   & 1.40  & 136   & 1.25  & 136   & 1.19  & 148   & 1.96  & 148   & 1.14 \\
    FJSP7 & 108   & 251.20 & 108   & 3.96  & 108   & 2.00  & 120   & 2.75  & 120   & 3.60 \\
    FJSP8 & 178   & 1.00  & 178   & 1.28  & 178   & 1.45  & 180   & 0.87  & 180   & 1.19 \\
    FJSP9 & 144   & 4.10  & 142   & 1.02  & 142   & 0.67  & 152   & 1.07  & 152   & 0.53 \\
    FJSP10 & 174   & 6.80  & 174   & 4.67  & 174   & 6.05  & 178   & 2.80  & 178   & 2.67 \\
    \midrule
    \textbf{Avg} & \textbf{131} & \textbf{28.15} & \textbf{131} & \textbf{1.74} & \textbf{131} & \textbf{1.57} & \textbf{140} & \textbf{1.41} & \textbf{140} & \textbf{1.28} \\
    \bottomrule
    \end{tabular}%
  \label{tab:Results on Small Instances}%
\end{table}%

Table \ref{tab:Results on Medium Instances} reports the average makespans obtained for the medium-scale instances in each HUdata class. Across these instances, the feature-equivalent models CP2-H, CP2-H$^{+}$, and CP2-H$^{++}$ remain closely aligned in performance. CP2-H achieves an average makespan of 2,413.50, while CP2-H$^{+}$ and CP2-H$^{++}$ obtain averages of 2,396.68 and 2,407.52, respectively. This confirms that the alternative formulations preserve solution quality in the absence of zoning and handoff constraints and benefit from the structural advantages observed in the small-instance experiments.

In contrast, CP3 and CP4 yield noticeably larger makespans across all classes, with average values of 3,127.39 and 3,345.80, respectively. This increase reflects the additional operational constraints introduced in the full FJSPT-H setting, including zoning restrictions, handoff points, and heterogeneous transbots, which reduce routing flexibility and make the coordination of machine–robot interactions more challenging. The performance gap is particularly pronounced for the edata and rdata sets, where transfer-time heterogeneity and routing restrictions amplify the complexity of the transportation decisions. Overall, these results indicate that although the proposed formulations remain robust, the additional realism introduced in CP3 and CP4 naturally yields larger optimal makespans.

\begin{table}[t!]
  \centering
  \caption{Results on Medium Instances}
    \begin{tabular}{l|c|c|c|c|c}
    \toprule
    Instance & CP2-H & CP2-H$^{+}$ & CP2-H$^{++}$ & CP3   & CP4 \\
    \midrule
    sdata & 2{,}795.10 & 2{,}882.33 & 2{,}844.98 & 4{,}131.95 & 4{,}072.15 \\
    \midrule
    edata & 2{,}920.50 & 2{,}780.25 & 2{,}779.75 & 3{,}934.08 & 3{,}926.95 \\
    \midrule
    rdata & 2{,}398.90 & 2{,}431.78 & 2{,}383.30 & 3{,}179.93 & 3{,}194.95 \\
    \midrule
    vdata & 1{,}921.10 & 1{,}978.03 & 2{,}059.50 & 2{,}268.18 & 2{,}915.50 \\
    \midrule
    \textbf{Avg} & \textbf{2{,}508.90} & \textbf{2{,}518.09} & \textbf{2{,}516.88} & \textbf{3{,}378.53} & \textbf{3{,}527.38} \\
    \bottomrule
    \end{tabular}%
  \label{tab:Results on Medium Instances}%
\end{table}%

Figure \ref{fig:L01_evolution} reports the evolution of CMAX over time for instance la01 under CP2-H$^{+}$, CP2-H$^{++}$, CP3, and CP4 for the four variants sdata, edata, rdata, and vdata. In both models, the curves exhibit the typical rapid-improvement phase during the earliest iterations (0–100s), followed by a plateau as the search converges. For CP2-H$^{+}$ and CP2-H$^{++}$, all four instance classes stabilize within roughly 200s, with vdata converging fastest and edata maintaining the highest makespan values. Under CP3 and CP4, a similar pattern appears, but final CMAX values are systematically higher, and the convergence is slightly slower, particularly for edata and sdata, which remain well above the other variants throughout the entire horizon.

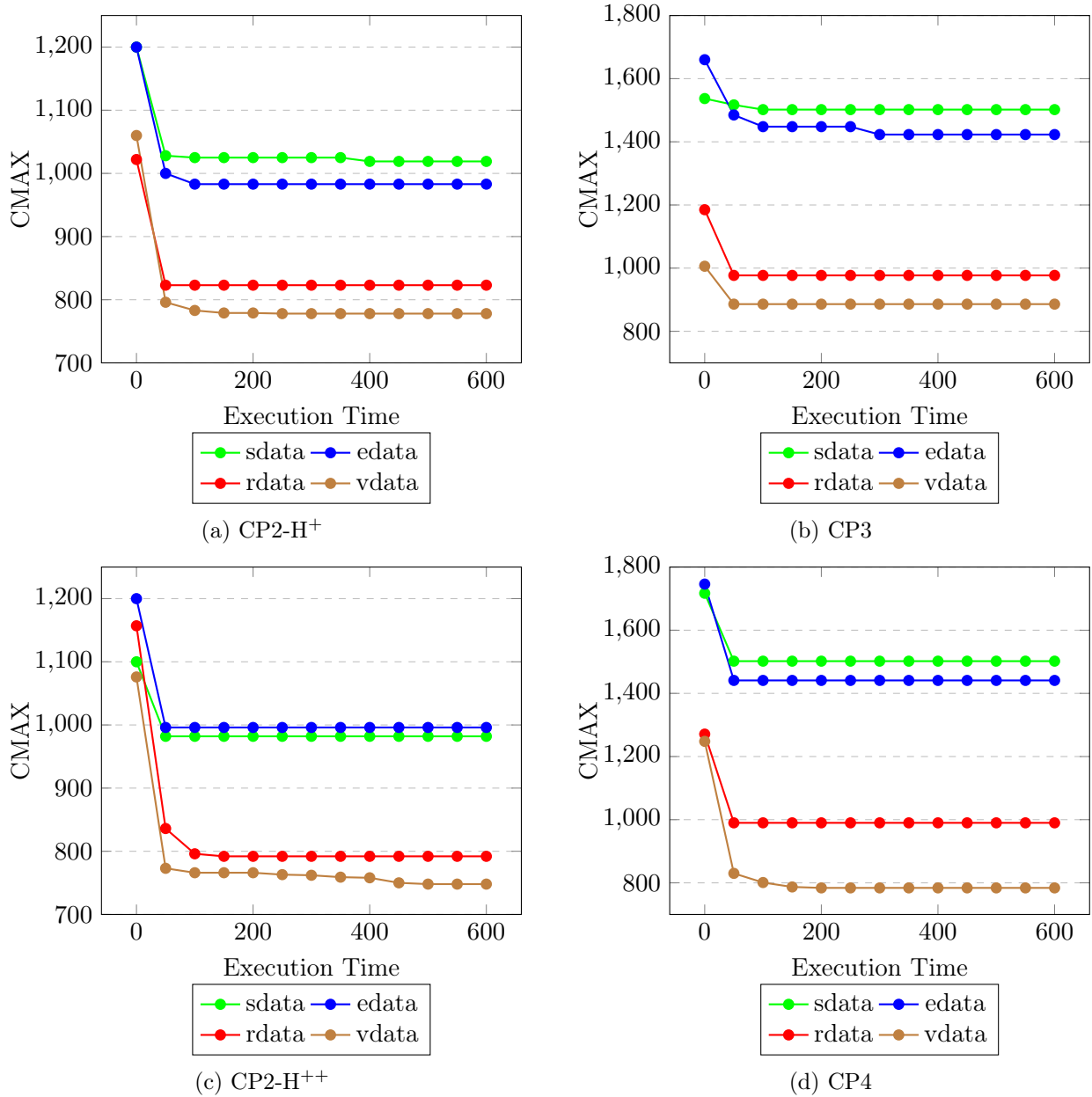
\begin{figure}[t!]
    \centering
    \begin{subfigure}[b]{0.48\textwidth}
        \centering
        \begin{tikzpicture}
        \begin{axis}[
            width=\textwidth,
            xlabel={Execution Time},
            ylabel={CMAX},
            ymin=700, ymax=1250,
            yticklabel style={/pgf/number format/fixed},
            xtick={0,200,400,600},
            legend style={at={(0.5,-0.2)}, anchor=north, legend columns=2},
            ymajorgrids=true,
            grid style=dashed,
        ]
        \addplot[
            color=green,
            mark=*,
            thick
        ] coordinates {
            (0,1200)
            (50,1028)
            (100,1025)
            (150,1025)
            (200,1025)
            (250,1025)
            (300,1025)
            (350,1025)
            (400,1019)
            (450,1019)
            (500,1019)
            (550,1019)
            (600,1019)
        };
        \addlegendentry{sdata}
        \addplot[
            color=blue,
            mark=*,
            thick
        ] coordinates {
            (0,1200)
            (50,1000)
            (100,983)
            (150,983)
            (200,983)
            (250,983)
            (300,983)
            (350,983)
            (400,983)
            (450,983)
            (500,983)
            (550,983)
            (600,983)
        };
        \addlegendentry{edata}
        \addplot[
            color=red,
            mark=*,
            thick
        ] coordinates {
            (0,1022)
            (50,823)
            (100,823)
            (150,823)
            (200,823)
            (250,823)
            (300,823)
            (350,823)
            (400,823)
            (450,823)
            (500,823)
            (550,823)
            (600,823)
        };
        \addlegendentry{rdata}
        \addplot[
            color=brown,
            mark=*,
            thick
        ] coordinates {
            (0,1060)
            (50,796)
            (100,783)
            (150,779)
            (200,779)
            (250,778)
            (300,778)
            (350,778)
            (400,778)
            (450,778)
            (500,778)
            (550,778)
            (600,778)
        };
        \addlegendentry{vdata}
        \end{axis}
        \end{tikzpicture}
        \caption{CP2-H$^+$}
        \label{fig:L01_evolution_CP2-H}
    \end{subfigure}
    \hfill
    \begin{subfigure}[b]{0.48\textwidth}
        \centering
        \begin{tikzpicture}
        \begin{axis}[
            width=\textwidth,
            xlabel={Execution Time},
            ylabel={CMAX},
            xtick={0,200,400,600},
            ymin=700, ymax=1800,
            legend style={at={(0.5,-0.2)}, anchor=north, legend columns=2},
            ymajorgrids=true,
            grid style=dashed,
        ]
        \addplot[
            color=green,
            mark=*,
            thick
        ] coordinates {
            (0,1537)
            (50,1517)
            (100,1502)
            (150,1502)
            (200,1502)
            (250,1502)
            (300,1502)
            (350,1502)
            (400,1502)
            (450,1502)
            (500,1502)
            (550,1502)
            (600,1502)
        };
        \addlegendentry{sdata}
        \addplot[
            color=blue,
            mark=*,
            thick
        ] coordinates {
            (0,1660)
            (50,1485)
            (100,1448)
            (150,1448)
            (200,1448)
            (250,1448)
            (300,1423)
            (350,1423)
            (400,1423)
            (450,1423)
            (500,1423)
            (550,1423)
            (600,1423)
        };
        \addlegendentry{edata}
        \addplot[
            color=red,
            mark=*,
            thick
        ] coordinates {
            (0,1185)
            (50,977)
            (100,977)
            (150,977)
            (200,977)
            (250,977)
            (300,977)
            (350,977)
            (400,977)
            (450,977)
            (500,977)
            (550,977)
            (600,977)
        };
        \addlegendentry{rdata}
        \addplot[
            color=brown,
            mark=*,
            thick
        ] coordinates {
            (0,1006)
            (50,886)
            (100,886)
            (150,886)
            (200,886)
            (250,886)
            (300,886)
            (350,886)
            (400,886)
            (450,886)
            (500,886)
            (550,886)
            (600,886)
        };
        \addlegendentry{vdata}
        \end{axis}
        \end{tikzpicture}
        \caption{CP3}
        \label{fig:L01_evolution_CP3}
    \end{subfigure}
    \begin{subfigure}[b]{0.48\textwidth}
        \centering
        \begin{tikzpicture}
        \begin{axis}[
            width=\textwidth,
            xlabel={Execution Time},
            ylabel={CMAX},
            ymin=700, ymax=1250,
            yticklabel style={/pgf/number format/fixed},
            xtick={0,200,400,600},
            legend style={at={(0.5,-0.2)}, anchor=north, legend columns=2},
            ymajorgrids=true,
            grid style=dashed,
        ]
        \addplot[
            color=green,
            mark=*,
            thick
        ] coordinates {
            (0,1100)
            (50,982)
            (100,982)
            (150,982)
            (200,982)
            (250,982)
            (300,982)
            (350,982)
            (400,982)
            (450,982)
            (500,982)
            (550,982)
            (600,982)
        };
        \addlegendentry{sdata}
        \addplot[
            color=blue,
            mark=*,
            thick
        ] coordinates {
            (0,1200)
            (50,996)
            (100,996)
            (150,996)
            (200,996)
            (250,996)
            (300,996)
            (350,996)
            (400,996)
            (450,996)
            (500,996)
            (550,996)
            (600,996)
        };
        \addlegendentry{edata}
        \addplot[
            color=red,
            mark=*,
            thick
        ] coordinates {
            (0,1157)
            (50,836)
            (100,796)
            (150,792)
            (200,792)
            (250,792)
            (300,792)
            (350,792)
            (400,792)
            (450,792)
            (500,792)
            (550,792)
            (600,792)
        };
        \addlegendentry{rdata}
        \addplot[
            color=brown,
            mark=*,
            thick
        ] coordinates {
            (0,1076)
            (50,773)
            (100,766)
            (150,766)
            (200,766)
            (250,763)
            (300,762)
            (350,759)
            (400,758)
            (450,750)
            (500,748)
            (550,748)
            (600,748)
        };
        \addlegendentry{vdata}
        \end{axis}
        \end{tikzpicture}
        \caption{CP2-H$^{++}$}
        \label{fig:L01_evolution_CP2-HH}
    \end{subfigure}
    \hfill
    \begin{subfigure}[b]{0.48\textwidth}
        \centering
        \begin{tikzpicture}
        \begin{axis}[
            width=\textwidth,
            xlabel={Execution Time},
            ylabel={CMAX},
            xtick={0,200,400,600},
            ymin=700, ymax=1800,
            legend style={at={(0.5,-0.2)}, anchor=north, legend columns=2},
            ymajorgrids=true,
            grid style=dashed,
        ]
        \addplot[
            color=green,
            mark=*,
            thick
        ] coordinates {
            (0,1717)
            (50,1502)
            (100,1502)
            (150,1502)
            (200,1502)
            (250,1502)
            (300,1502)
            (350,1502)
            (400,1502)
            (450,1502)
            (500,1502)
            (550,1502)
            (600,1502)
        };
        \addlegendentry{sdata}
        \addplot[
            color=blue,
            mark=*,
            thick
        ] coordinates {
            (0,1746)
            (50,1441)
            (100,1441)
            (150,1441)
            (200,1441)
            (250,1441)
            (300,1441)
            (350,1441)
            (400,1441)
            (450,1441)
            (500,1441)
            (550,1441)
            (600,1441)
        };
        \addlegendentry{edata}
        \addplot[
            color=red,
            mark=*,
            thick
        ] coordinates {
            (0,1271)
            (50,990)
            (100,990)
            (150,990)
            (200,990)
            (250,990)
            (300,990)
            (350,990)
            (400,990)
            (450,990)
            (500,990)
            (550,990)
            (600,990)
        };
        \addlegendentry{rdata}
        \addplot[
            color=brown,
            mark=*,
            thick
        ] coordinates {
            (0,1248)
            (50,830)
            (100,801)
            (150,787)
            (200,784)
            (250,784)
            (300,784)
            (350,784)
            (400,784)
            (450,784)
            (500,784)
            (550,784)
            (600,784)
        };
        \addlegendentry{vdata}
        \end{axis}
        \end{tikzpicture}
        \caption{CP4}
        \label{fig:L01_evolution_CP4}
    \end{subfigure}
    \caption{Evolution of CMAX with Execution Time for Instances la01}
    \label{fig:L01_evolution}
\end{figure}

Two insights emerge from these trends. First, CP2-H$^{+}$ and CP2-H$^{++}$ consistently achieve lower makespan values more quickly than CP3 and CP4, highlighting the added complexity when considering zoning and handoff. The strong early reductions in CMAX across all variants suggest that both models rapidly exploit simple improvement opportunities, but CP3 and CP4’s additional structural constraints restrict feasible moves and slow convergence. Second, the relative ordering of sdata, edata, rdata, and vdata persists across formulations, indicating that instance flexibility is a dominant driver of difficulty. The steeper and longer convergence tails of CP3 and CP4 reflect the increased combinatorial complexity introduced by heterogeneity and zoning, which leads to higher final makespans within the fixed time limit.

\subsection{Sensitivity Analysis}

In this section, the impact of the number of transbots and the number of zones on both small and medium-scale instances is evaluated. Then, the impact of the layout on medium-scale instances is assessed.

Table~\ref{tab:tranbot_impact} reports the results of a sensitivity analysis where the number of zones is kept constant at two while the number of transbots is progressively increased from two to ten. For each configuration, the table reports the resulting makespan and execution time for both proposed formulations, CP3 and CP4. A first clear observation is that increasing the number of transbots significantly reduces the makespan when transportation resources are initially scarce. Moving from two to four transbots reduces the average makespan from 140 to 137, while increasing the fleet to six transbots further decreases the average makespan to 129. This improvement reflects the reduction of transportation waiting times between consecutive operations. With a limited number of transbots, jobs frequently wait for a transbot to become available, delaying the start of subsequent machine operations and increasing the overall makespan.

However, beyond six transbots, additional transbots no longer provide further improvements in solution quality. The average makespan stabilizes at approximately 129 for configurations with six, eight, and ten transbots. This plateau indicates that transportation ceases to be the primary bottleneck once sufficient transbot capacity is available. At this stage, the schedule's critical path is primarily determined by machine-processing constraints rather than by material-handling delays, so additional transportation capacity yields diminishing returns.

The results also reveal that both formulations have identical makespan values across all configurations, confirming their equivalence in solution quality. In terms of computational performance, however, CP4 consistently requires less execution time than CP3. This difference becomes more pronounced as the number of transbots increases, suggesting that the operation-embedded formulation scales more efficiently with larger transportation fleets. Overall, these results highlight the importance of balancing transportation capacity with machine resources, and they suggest that a moderate number of transbots is sufficient to achieve near-optimal system performance in zoned robotic flexible job shops.

\begin{table}[t!]
  \centering
  \scalebox{0.55}{
  \caption{Transbot Impact Small Instances} 
    \begin{tabular}{l|rr|rr|rr|rr|rr|rr|rr|rr|rr|rr}
    \toprule
    Zones & \multicolumn{4}{c|}{2}        & \multicolumn{4}{c|}{2}        & \multicolumn{4}{c|}{2}        & \multicolumn{4}{c|}{2}        & \multicolumn{4}{c}{2} \\
    \midrule
    Transbots & \multicolumn{4}{c|}{2}        & \multicolumn{4}{c|}{4}        & \multicolumn{4}{c|}{6}        & \multicolumn{4}{c|}{8}        & \multicolumn{4}{c}{10} \\
    \midrule
    \multirow{2}[4]{*}{Instance} & \multicolumn{2}{c|}{CP3} & \multicolumn{2}{c|}{CP4} & \multicolumn{2}{c|}{CP3} & \multicolumn{2}{c|}{CP4} & \multicolumn{2}{c|}{CP3} & \multicolumn{2}{c|}{CP4} & \multicolumn{2}{c|}{CP3} & \multicolumn{2}{c|}{CP4} & \multicolumn{2}{c|}{CP3} & \multicolumn{2}{c}{CP4} \\
\cmidrule{2-21}          & CMAX  & Time  & CMAX  & Time  & CMAX  & Time  & CMAX  & Time  & CMAX  & Time  & CMAX  & Time  & CMAX  & Time  & CMAX  & Time  & CMAX  & Time  & CMAX  & Time \\
    \midrule
    FJSP1 & 144   & 1.79  & 144   & 1.27  & 138   & 0.59  & 138   & 0.13  & 132   & 0.87  & 132   & 0.18  & 132   & 1.02  & 132   & 0.20  & 132   & 1.41  & 132   & 0.20 \\
    \midrule
    FJSP2 & 120   & 0.60  & 120   & 0.72  & 112   & 0.41  & 112   & 0.07  & 110   & 0.63  & 110   & 0.14  & 110   & 0.69  & 110   & 0.13  & 110   & 0.88  & 110   & 0.14 \\
    \midrule
    FJSP3 & 132   & 0.46  & 132   & 0.53  & 136   & 0.46  & 136   & 0.07  & 126   & 1.03  & 126   & 0.55  & 126   & 1.26  & 126   & 0.71  & 126   & 2.11  & 126   & 0.56 \\
    \midrule
    FJSP4 & 130   & 1.31  & 130   & 0.81  & 126   & 0.89  & 126   & 0.20  & 118   & 1.49  & 118   & 0.50  & 118   & 1.64  & 118   & 0.58  & 118   & 2.40  & 118   & 0.90 \\
    \midrule
    FJSP5 & 98    & 0.52  & 98    & 0.34  & 100   & 0.32  & 100   & 0.06  & 90    & 0.44  & 90    & 0.08  & 90    & 0.56  & 90    & 0.11  & 90    & 0.65  & 90    & 0.09 \\
    \midrule
    FJSP6 & 148   & 1.96  & 148   & 1.14  & 142   & 0.75  & 142   & 0.30  & 138   & 4.42  & 138   & 1.30  & 138   & 6.70  & 138   & 2.22  & 138   & 8.46  & 138   & 2.78 \\
    \midrule
    FJSP7 & 120   & 2.75  & 120   & 3.60  & 108   & 0.69  & 108   & 0.14  & 98    & 2.00  & 98    & 0.69  & 98    & 2.12  & 98    & 0.88  & 98    & 2.76  & 98    & 1.17 \\
    \midrule
    FJSP8 & 180   & 0.87  & 180   & 1.19  & 188   & 2.50  & 188   & 1.91  & 180   & 3.01  & 180   & 2.23  & 180   & 5.04  & 180   & 2.00  & 180   & 7.19  & 180   & 4.18 \\
    \midrule
    FJSP9 & 152   & 1.07  & 152   & 0.53  & 148   & 0.51  & 148   & 0.22  & 138   & 1.95  & 138   & 1.04  & 138   & 3.20  & 138   & 0.95  & 138   & 4.79  & 138   & 1.33 \\
    \midrule
    FJSP10 & 178   & 2.80  & 178   & 2.67  & 176   & 1.42  & 176   & 2.17  & 164   & 5.60  & 164   & 1.01  & 164   & 6.80  & 164   & 1.77  & 164   & 4.86  & 164   & 2.20 \\
    \midrule
    \textbf{Avg} & \textbf{140} & \textbf{1.41} & \textbf{140} & \textbf{1.28} & \textbf{137} & \textbf{0.85} & \textbf{137} & \textbf{0.53} & \textbf{129} & \textbf{2.14} & \textbf{129} & \textbf{0.77} & \textbf{129} & \textbf{2.90} & \textbf{129} & \textbf{0.96} & \textbf{129} & \textbf{3.55} & \textbf{129} & \textbf{1.36} \\
    \bottomrule
    \end{tabular}%
  \label{tab:tranbot_impact}%
}
\end{table}%

Table~\ref{tab:zone_impact} analyzes the impact of zoning on system performance while keeping the number of transbots fixed at four. The number of zones is progressively varied from 1 to 4 to evaluate how spatial partitioning affects coordination between machines and transportation resources. For each configuration, the resulting makespan and execution time are reported for both formulations.

The results show that zoning has a significant impact on the overall system performance. When all machines belong to a single zone, the average makespan is 125, which corresponds to the most flexible transportation configuration. In this setting, all transbots can serve any machine, and material transfers can be executed directly without intermediate handoffs. As a result, transportation delays are minimized, and jobs can be transferred more efficiently between consecutive operations.

Introducing zoning constraints increases the makespan, as observed when moving from one zone to two zones, where the average makespan rises from 125 to 137. This degradation is primarily due to the reduced routing flexibility induced by zoning. When machines are partitioned into separate zones, inter-zone transfers must be coordinated through handoff points, which introduces additional synchronization requirements and limits the ability of transbots to respond dynamically to transportation requests.

Interestingly, increasing the number of zones from two to four partially mitigates this effect, reducing the average makespan to 130. This behavior can be explained by the smaller spatial scope of each zone, which shortens intra-zone travel distances and reduces local transportation congestion. Overall, these results highlight the trade-off introduced by zoning policies: while zoning can simplify traffic management and reduce local travel distances, it may also restrict routing flexibility and increase coordination overhead. Consequently, an appropriate balance between zoning structure and transportation flexibility is critical for achieving efficient scheduling in robotic flexible job shops.

\begin{table}[t!]
  \centering
  \scalebox{0.8}{
  \caption{Zoning Impact Small Instances}
    \begin{tabular}{l|rr|rr|rr|rr|rr|rr}
    \toprule
    Zones & \multicolumn{4}{c|}{1}        & \multicolumn{4}{c|}{2}        & \multicolumn{4}{c}{4} \\
    \midrule
    Transbots & \multicolumn{4}{c|}{4}        & \multicolumn{4}{c|}{4}        & \multicolumn{4}{c}{4} \\
    \midrule
    \multirow{2}[4]{*}{Instance} & \multicolumn{2}{c|}{CP3} & \multicolumn{2}{c|}{CP4} & \multicolumn{2}{c|}{CP3} & \multicolumn{2}{c|}{CP4} & \multicolumn{2}{c|}{CP3} & \multicolumn{2}{c}{CP4} \\
\cmidrule{2-13}          & CMAX  & Time  & CMAX  & Time  & CMAX  & Time  & CMAX  & Time  & CMAX  & Time  & CMAX  & Time \\
    \midrule
    FJSP1 & 132   & 0.45  & 132   & 0.17  & 138   & 0.59  & 138   & 0.13  & 132   & 0.6   & 132   & 0.14 \\
    \midrule
    FJSP2 & 104   & 0.30   & 104   & 0.06  & 112   & 0.41  & 112   & 0.07  & 110   & 0.41  & 110   & 0.11 \\
    \midrule
    FJSP3 & 120   & 0.35  & 120   & 0.05  & 136   & 0.46  & 136   & 0.07  & 126   & 0.89  & 126   & 0.32 \\
    \midrule
    FJSP4 & 102   & 0.46  & 102   & 0.06  & 126   & 0.89  & 126   & 0.20   & 118   & 1.06  & 118   & 0.82 \\
    \midrule
    FJSP5 & 90    & 0.23  & 90    & 0.02  & 100   & 0.32  & 100   & 0.06  & 90    & 0.29  & 90    & 0.08 \\
    \midrule
    FJSP6 & 132   & 0.55  & 132   & 0.15  & 142   & 0.75  & 142   & 0.30   & 138   & 1.83  & 138   & 1.08 \\
    \midrule
    FJSP7 & 94    & 0.45  & 94    & 0.17  & 108   & 0.69  & 108   & 0.14  & 100   & 1.86  & 100   & 0.96 \\
    \midrule
    FJSP8 & 178   & 1.72  & 178   & 2.35  & 188   & 2.50   & 188   & 1.91  & 180   & 2.26  & 180   & 2.53 \\
    \midrule
    FJSP9 & 136   & 0.40   & 136   & 0.09  & 148   & 0.51  & 148   & 0.22  & 140   & 1.98  & 140   & 1.79 \\
    \midrule
    FJSP10 & 164   & 0.75  & 164   & 0.71  & 176   & 1.42  & 176   & 2.17  & 166   & 4.16  & 166   & 2.22 \\
    \midrule
    \textbf{Avg} & \textbf{125} & \textbf{0.57} & \textbf{125} & \textbf{0.38} & \textbf{137} & \textbf{0.85} & \textbf{137} & \textbf{0.53} & \textbf{130} & \textbf{1.53} & \textbf{130} & \textbf{1.01} \\
    \bottomrule
    \end{tabular}%
  \label{tab:zone_impact}%
}
\end{table}%

Table~\ref{tab:sensitivity_medium} reports the sensitivity analysis on medium-scale instances, evaluating the impact of zoning and transportation capacity across the four HUdata classes. The table presents average makespans for CP3 and CP4 under four configurations: a base case (2 zones, 2 transbots), increased zoning (4 zones, 4 transbots), increased zone-to-bot ratio (4 zones, 8 transbots), and increased transportation capacity (2 zones, 6 transbots). Numbers in parentheses to the left of each objective value denote the number of instances solved to optimality out of the total number of instances for which a feasible solution was obtained, shown on the right. If no number is shown on the left, then none of the reported objective values were proven optimal. Numbers in parentheses to the right denote the number of instances, out of 40, for which a feasible solution was found within the 600-second time limit. If no number is shown on the right, then feasible solutions were obtained for all instances.

Increasing the number of transbots substantially reduces the makespan across all instance classes. Moving from the base configuration (2 transbots) to higher-capacity settings (6 or 8 transbots) dramatically decreases makespan values, particularly for the edata and rdata classes. For example, the average makespan drops from above 3000 in the base case to nearly half in high-capacity configurations. This confirms that transportation is a major bottleneck in medium-scale instances. Moreover, the number of solved instances also decreases as the system becomes more constrained (e.g., with fewer transbots), indicating that limited transportation capacity not only degrades solution quality but also increases computational difficulty.

Increasing the number of zones introduces a more nuanced effect. While higher zoning levels (e.g., 4 zones) reduce travel distances and can improve makespan when combined with sufficient transportation capacity, they also introduce additional coordination overhead due to inter-zone transfers. This is reflected in the variability of results across configurations: zoning alone does not consistently improve performance, but when coupled with a sufficient number of transbots, it enables significantly better solutions. Additionally, the number of solved instances tends to decrease in highly zoned configurations, suggesting that increased routing constraints and synchronization requirements lead to more challenging search spaces.

\begin{table}[t!]
  \centering
  \caption{Transbot and Zoning Impact Medium Instances}
    \scalebox{0.75}{
    \begin{tabular}{l|r|r|r|r|r|r|r|r}
    \toprule
    Zones & \multicolumn{2}{c|}{2} & \multicolumn{2}{c|}{4} & \multicolumn{2}{c|}{4} & \multicolumn{2}{c}{2} \\
    \midrule
    Transbots & \multicolumn{2}{c|}{2} & \multicolumn{2}{c|}{4} & \multicolumn{2}{c|}{8} & \multicolumn{2}{c}{6} \\
    \midrule
    Instance & CP3 & CP4 & CP3   & CP4   & CP3   & CP4   & CP3   & CP4 \\
    \midrule
    sdata & 4{,}131.95 & 4{,}072.15 & \multicolumn{1}{r|}{(1) 2{,}263.53 (30)} & \multicolumn{1}{r|}{(1) 2{,}794.45} & \multicolumn{1}{r|}{(6) 1{,}262.44 (25)} & \multicolumn{1}{r|}{(7) 1{,}663.48} & \multicolumn{1}{r|}{(9) 1{,}391.70 (30)} & (9) 1{,}658.90 \\
    \midrule
    edata & 3{,}934.08 & 3{,}926.95 & \multicolumn{1}{r|}{2{,}077.43 (30)} & \multicolumn{1}{r|}{2{,}611.98} & \multicolumn{1}{r|}{1{,}291.43 (28)} & \multicolumn{1}{r|}{1{,}593.90} & \multicolumn{1}{r|}{1{,}316.76 (30)} & 1{,}611.28 \\
    \midrule
    rdata & 3{,}179.93 & 3{,}194.95 & \multicolumn{1}{r|}{1{,}570.53 (30)} & \multicolumn{1}{r|}{2{,}128.25} & \multicolumn{1}{r|}{1{,}140.70 (27)} & \multicolumn{1}{r|}{1{,}460.85} & \multicolumn{1}{r|}{1{,}190.93 (30)} & 1{,}427.53 \\
    \midrule
    vdata & 2{,}268.18 & 2{,}915.50 & \multicolumn{1}{r|}{1{,}334.86 (30)} & \multicolumn{1}{r|}{1{,}444.73 (30)} & \multicolumn{1}{r|}{1{,}031.28 (25)} & \multicolumn{1}{r|}{870.69 (16)} & \multicolumn{1}{r|}{808.57 (14)} & 1{,}912.42 (31) \\
    \bottomrule
    \end{tabular}%
    }
  \label{tab:sensitivity_medium}%
\end{table}%

Introducing zoning constraints generally increases the makespan relative to fully flexible configurations, as transbots are restricted to specific regions and inter-zone transfers must be coordinated via handoff points, thereby reducing routing flexibility. Increasing the number of transbots significantly improves performance when transportation resources are scarce by reducing waiting times between operations. However, this effect quickly exhibits diminishing returns, as machine processing constraints eventually dominate the critical path. These results highlight a strong interaction between zoning and transportation capacity. While increasing the number of transbots consistently improves performance, the benefits of zoning depend critically on the available transportation resources. In medium-scale instances, effective system performance requires a careful balance between routing flexibility, congestion management, and transbot availability. It is worth highlighting that CP4 outperforms CP3 in less flexible configurations.

Table~\ref{tab:layout_impact} evaluates the impact of layout design on medium-scale instances by comparing three different layouts, each inducing distinct distance and travel-time matrices. Each layout is randomly generated by sampling distances from a uniform distribution of [20,40], following the approach from \cite{ham2020transfer}. To ensure a fair comparison across the three layout designs, the random seed is varied, generating distinct layout instances for each dataset. The layouts differ in the spatial arrangement of machines, which directly affects transportation times and routing decisions. The average makespans obtained by CP3 and CP4 across the four HUdata classes are reported. 

\begin{table}[t!]
  \centering
  \caption{Layout Impact Medium Instances}
    \begin{tabular}{l|r|r|r|r|r|r}
    \toprule
    Layout & \multicolumn{2}{c|}{Layout 1} & \multicolumn{2}{c|}{Layout 2} & \multicolumn{2}{c}{Layout 3} \\
    \midrule
    Instance & \multicolumn{1}{c|}{CP3} & \multicolumn{1}{c|}{CP4} & \multicolumn{1}{c|}{CP3} & \multicolumn{1}{c|}{CP4} & \multicolumn{1}{c|}{CP3} & \multicolumn{1}{c}{CP4} \\
    \midrule
    sdata & 4{,}131.95 & 4{,}072.15 & 4{,}176.08 & 4{,}089.98 & 4{,}215.98 & 4{,}124.28 \\
    \midrule
    edata & 3{,}934.08 & 3{,}926.95 & 3{,}927.45 & 3{,}942.15 & 4{,}005.95 & 3{,}966.73 \\
    \midrule
    rdata & 3{,}179.93 & 3{,}194.95 & 3{,}150.63 & 3{,}183.30 & 3{,}187.88 & 3{,}186.65 \\
    \midrule
    vdata & 2{,}268.18 & 2{,}915.50 & 2{,}116.13 & 2{,}617.20 & 2{,}230.83 & 2{,}701.53 \\
    \midrule
    \textbf{Avg} & \textbf{3{,}378.53} & \textbf{3{,}527.39} & \textbf{3{,}342.57} & \textbf{3{,}458.16} & \textbf{3{,}410.16} & \textbf{3{,}494.79} \\
    \bottomrule
    \end{tabular}%
  \label{tab:layout_impact}%
\end{table}%

The results show that layout design has a noticeable impact on system performance, although its effect is more moderate compared to zoning and transportation capacity. Layout~2 consistently yields the best average performance for both formulations, reducing the makespan relative to Layout~1, while Layout~3 slightly degrades performance. The underlying travel-time structure drives these differences: more favorable layouts reduce average transportation distances and improve coordination between machines and transbots, leading to shorter overall schedules.

The impact of layout is particularly pronounced for the vdata class, with significant variations in makespan across layouts. This suggests that instances with higher routing flexibility or variability are more sensitive to spatial configuration. In contrast, the edata and rdata classes exhibit more stable behavior, indicating that their performance is less dependent on layout structure and more driven by inherent processing and routing constraints.

Overall, these results highlight that layout design influences scheduling performance through its effect on travel times, but its impact remains secondary to key structural factors such as transportation capacity and zoning. Nonetheless, selecting an appropriate layout can yield meaningful improvements, particularly when transportation plays a critical role in the system.  

\subsection{Managerial Insights}

The FJSPT-H model offers substantial operational and strategic benefits for manufacturing facilities integrating mobile transfer transbots. By explicitly capturing heterogeneous transbots, zoning constraints, and handoff points, the model enables managers to make informed decisions regarding machine allocation, transbot assignment, and workflow sequencing. Zones and handoff points not only ensure collision-free operations but also enable predictable and repeatable material flow, which is critical in high-mix, high-throughput environments where delays or conflicts can cascade across the shop floor. Incorporating these factors into the planning stage reduces reliance on ad hoc adjustments, increases system reliability, and supports scalable operations.

Zoning and handoff strategies provide a structured framework for preventing congestion and optimizing spatial layout. Assigning transbots to specific zones ensures that each transbot operates within its capacity and accessibility constraints, while handoff points coordinate transfers between zones efficiently. This allows managers to design shop floors that balance workloads across machines, minimize queuing at stations, and reduce idle times for both transbots and machines. By modeling these operational details explicitly, managers can experiment with alternative floor layouts and determine the most efficient assignment of transbots to zones, ultimately enhancing overall productivity.

Mixed manufacturing systems with heterogeneous transbots demonstrate increased robustness and flexibility. Even when transbots differ in load capacity, speed, and zone accessibility, the model shows that effective scheduling and routing can significantly reduce makespan and improve throughput. This flexibility enables facilities to absorb variability in job sizes, machine availability, and unplanned maintenance, without degrading system performance. Managers can use this insight to invest strategically in transbot diversity, ensuring that the fleet can handle varying operational requirements while maintaining high service levels.

The structural differences between the two formulations suggest distinct advantages depending on the operational environment. The arc-based formulation provides a more modular representation by separating arc selection, leg decomposition, and transbot assignment. This structure is particularly advantageous in environments with complex routing logic, strict zoning policies, or frequent inter-zone transfers, where explicit control over routing decisions enhances interpretability and facilitates incorporating additional constraints. In contrast, the operation-embedded formulation integrates routing and processing decisions into a single modeling layer. This tighter coupling can enhance propagation and reduce redundant decision layers, thereby improving computational performance in environments with high machine flexibility or dense routing networks. Because it avoids an explicit arc-selection layer, the operation-embedded model may also exhibit a smaller memory footprint when the number of feasible arcs per operation is large. Therefore, the arc-based formulation favors transparency and modular routing control, whereas the operation-embedded formulation emphasizes structural compactness and integration. Instance characteristics, such as zoning density, routing complexity, and machine flexibility, should guide the choice between the two.

The CP framework itself provides a practical decision-support tool. By capturing machine flexibility, tansbot heterogeneity, and temporal constraints in a single model, managers can explore trade-offs between transbot utilization, transfer times, and job completion times. This enables quantitative assessment of operational policies, such as prioritizing high-value jobs, reallocating transbots in real-time, or evaluating the impact of additional handoff points. The CP-based approach also supports rapid experimentation with different scenarios, which is particularly valuable in dynamic or seasonal production environments where traditional scheduling methods may be too rigid or slow to adapt.

Finally, the quick solution to benchmark-inspired and case-based instances adds a layer of reproducibility to operational planning. Organizations can use these layouts to validate new scheduling strategies, compare different algorithmic approaches, and refine operational policies under realistic conditions. It also allows managers to identify potential bottlenecks, calibrate transbot allocation rules, and improve predictive maintenance planning. Overall, the FJSPT-H model supports evidence-based decision-making, enhancing efficiency, safety, and responsiveness in automated production systems.

\section{Conclusion}\label{section:8}

This study addresses the coordinated scheduling of operations and material transfers in flexible job shops equipped with heterogeneous transbots. By integrating job sequencing and transbot routing into a single framework, the research formulates a complex, NP-hard problem that extends the classical flexible job shop problem by incorporating realistic transbot routing considerations. The proposed constraint programming models effectively capture machine flexibility, transbot heterogeneity, load capacities, and collision-free path planning. The introduction of a book-and-release strategy enables transbots to operate efficiently without rigid routing, improving practical applicability. Computational experiments on benchmark instances demonstrate that the approach produces high-quality solutions with reasonable computation times. The publicly released instances provide a foundation for future research on joint scheduling and routing in automated manufacturing environments. Collectively, the results underscore the potential of constraint programming to support flexible, high-throughput, and robust production systems in real-world manufacturing settings.

\section*{Acknowledgments}

This research was partly supported by the NSF AI Institute for Advances in Optimization (Award 2112533) and the Georgia Tech Manufacturing Institute (GTAIM).

\begin{spacing}{1}
\typeout{}
\bibliographystyle{apalike}
\bibliography{References.bib}

@inproceedings{booth2016constraint,
  title={A constraint programming approach to multi-robot task allocation and scheduling in retirement homes},
  author={Booth, Kyle EC and Nejat, Goldie and Beck, J Christopher},
  booktitle={International conference on principles and practice of constraint programming},
  pages={539--555},
  year={2016},
  organization={Springer}
}

@article{homayouni2021production,
  title={Production and transport scheduling in flexible job shop manufacturing systems},
  author={Homayouni, Seyed Mahdi and Fontes, Dalila BMM},
  journal={Journal of Global Optimization},
  volume={79},
  number={2},
  pages={463--502},
  year={2021},
  publisher={Springer}
}

@article{yan2021research,
  title={Research on flexible job shop scheduling under finite transportation conditions for digital twin workshop},
  author={Yan, Jun and Liu, Zhifeng and Zhang, Caixia and Zhang, Tao and Zhang, Yueze and Yang, Congbin},
  journal={Robotics and Computer-Integrated Manufacturing},
  volume={72},
  pages={102198},
  year={2021},
  publisher={Elsevier}
}

@article{homayouni2023multistart,
  title={A multistart biased random key genetic algorithm for the flexible job shop scheduling problem with transportation},
  author={Homayouni, S Mahdi and Fontes, Dalila BMM and Gon{\c{c}}alves, Jos{\'e} F},
  journal={International Transactions in Operational Research},
  volume={30},
  number={2},
  pages={688--716},
  year={2023},
  publisher={Wiley Online Library}
}

@article{chambers1996new,
  title={New tabu search results for the job shop scheduling problem},
  author={Chambers, John B and Barnes, J Wesley},
  journal={The University of Texas, Austin, Technical Report Series ORP96-06, Graduate Program in Operations Research and Industrial Engineering},
  year={1996},
  publisher={Citeseer}
}

@article{kumar2011simultaneous,
  title={Simultaneous scheduling of machines and vehicles in an FMS environment with alternative routing},
  author={Kumar, MV Satish and Janardhana, Ranga and Rao, CSP},
  journal={The International Journal of Advanced Manufacturing Technology},
  volume={53},
  number={1},
  pages={339--351},
  year={2011},
  publisher={Springer}
}

@inproceedings{deroussi2014hybrid,
  title={A hybrid PSO applied to the flexible job shop with transport},
  author={Deroussi, Laurent},
  booktitle={International conference on swarm intelligence based optimization},
  pages={115--122},
  year={2014},
  organization={Springer}
}

@ARTICLE{11339952,
  author={Yao, Youjie and Liu, Qihao and Li, Xinyu and Gao, Liang},
  journal={IEEE Transactions on Automation Science and Engineering}, 
  title={Constraint Programming for AGV and Machine Integrated Scheduling Problem in Flexible Manufacturing System}, 
  year={2026},
  volume={23},
  number={},
  pages={2378-2390},
  doi={10.1109/TASE.2025.3650678}}

@article{yao2024novel,
  title={A novel mathematical model for the flexible job-shop scheduling problem with limited automated guided vehicles},
  author={Yao, Youjie and Liu, Qihao and Fu, Ling and Li, Xinyu and Yu, Yanbin and Gao, Liang and Zhou, Wei},
  journal={IEEE Transactions on Automation Science and Engineering},
  year={2024},
  publisher={IEEE}
}

@article{yao2026knowledge,
  title={A knowledge-enhanced discrete artificial bee colony algorithm for flexible job shop scheduling problem with transport robots},
  author={Yao, Youjie and Ye, Kai and Zhang, Chunjiang and Li, Xinyu and Gao, Liang},
  journal={Journal of Manufacturing Systems},
  volume={84},
  pages={614--628},
  year={2026},
  publisher={Elsevier}
}

@article{ham2020transfer,
  title={Transfer-robot task scheduling in flexible job shop},
  author={Ham, Andy},
  journal={Journal of Intelligent Manufacturing},
  volume={31},
  number={7},
  pages={1783--1793},
  year={2020},
  publisher={Springer}
}

@inproceedings{deroussi2010simultaneous,
  title={Simultaneous scheduling of machines and vehicles for the flexible job shop problem},
  author={Deroussi, L and Norre, S},
  booktitle={International conference on metaheuristics and nature inspired computing},
  pages={1--2},
  year={2010},
  organization={Djerba Island Tunisia}
}

@article{berterottiere2024flexible,
  title={Flexible job-shop scheduling with transportation resources},
  author={Berterotti{\`e}re, Lucas and Dauz{\`e}re-P{\'e}r{\`e}s, St{\'e}phane and Yugma, Claude},
  journal={European Journal of Operational Research},
  volume={312},
  number={3},
  pages={890--909},
  year={2024},
  publisher={Elsevier}
}

@article{naderi2023mixed,
  title={Mixed-integer programming vs. constraint programming for shop scheduling problems: new results and outlook},
  author={Naderi, Bahman and Ruiz, Rub{\'e}n and Roshanaei, Vahid},
  journal={INFORMS Journal on Computing},
  volume={35},
  number={4},
  pages={817--843},
  year={2023},
  publisher={INFORMS}
}

@article{ku2016mixed,
  title={Mixed integer programming models for job shop scheduling: A computational analysis},
  author={Ku, Wen-Yang and Beck, J Christopher},
  journal={Computers \& Operations Research},
  volume={73},
  pages={165--173},
  year={2016},
  publisher={Elsevier}
}

@article{bilge1995time,
  title={A time window approach to simultaneous scheduling of machines and material handling system in an FMS},
  author={Bilge, {\"U}mit and Ulusoy, G{\"u}nd{\"u}z},
  journal={Operations Research},
  volume={43},
  number={6},
  pages={1058--1070},
  year={1995},
  publisher={INFORMS}
}

@article{hurink1994tabu,
  title={Tabu search for the job-shop scheduling problem with multi-purpose machines},
  author={Hurink, Johann and Jurisch, Bernd and Thole, Monika},
  journal={Operations-Research-Spektrum},
  volume={15},
  number={4},
  pages={205--215},
  year={1994},
  publisher={Springer}
}

@inproceedings{raman1986simultaneous,
  title={Simultaneous scheduling of machines and material handling devices in automated manufacturing},
  author={Raman, N},
  booktitle={Proc. of the Second ORSA/TIMS Conference on Flexible Manufacturing Systems: Operations Research Models and Applications, 1986},
  year={1986}
}

@article{hurink2005tabu,
  title={Tabu search algorithms for job-shop problems with a single transport robot},
  author={Hurink, Johann and Knust, Sigrid},
  journal={European journal of operational research},
  volume={162},
  number={1},
  pages={99--111},
  year={2005},
  publisher={Elsevier}
}

@article{karimi2017scheduling,
  title={Scheduling flexible job-shops with transportation times: Mathematical models and a hybrid imperialist competitive algorithm},
  author={Karimi, Sajad and Ardalan, Zaniar and Naderi, B and Mohammadi, M},
  journal={Applied mathematical modell{\'\i}ng},
  volume={41},
  pages={667--682},
  year={2017},
  publisher={Elsevier}
}

@article{nouri2016simultaneous,
  title={Simultaneous scheduling of machines and transport robots in flexible job shop environment using hybrid metaheuristics based on clustered holonic multiagent model},
  author={Nouri, Houssem Eddine and Driss, Olfa Belkahla and Gh{\'e}dira, Khaled},
  journal={Computers \& Industrial Engineering},
  volume={102},
  pages={488--501},
  year={2016},
  publisher={Elsevier}
}

@article{he2025hybrid,
  title={A hybrid approach using ant colony optimisation for integrated scheduling of production and transportation tasks within flexible manufacturing systems},
  author={He, Naihui and Zhang, David and Bettayeb, Belgacem and others},
  journal={M'hammed and Zhang, David and Bettayeb, Belgacem, A Hybrid Approach Using Ant Colony Optimisation for Integrated Scheduling of Production and Transportation Tasks within Flexible Manufacturing Systems},
  year={2025}
}

@misc{Dresser2025AmazonRobotics,
  author       = {Scott Dresser},
  title        = {Amazon launches a new AI foundation model to power its robotic fleet and deploys its 1 millionth robot},
  howpublished = {About Amazon (official Amazon news site)},
  month        = jun,
  year         = {2025},
  day          = {30},
  url          = {https://www.aboutamazon.com/news/operations/amazon-million-robots-ai-foundation-model}
}

@misc{Jenkins2025WarehouseRobotics,
  author       = {Abby Jenkins},
  title        = {What Is Warehouse Robotics? The Ultimate Guide for 2025},
  howpublished = {NetSuite (Resource Center, article)},
  month        = may,
  year         = {2025},
  day          = {22},
  note         = {Online; accessed on \today},
  url          = {https://www.netsuite.com/portal/resource/articles/ecommerce/warehouse-robotics.shtml}
}

@misc{Repko2023WalmartAutomation,
  author       = {Melissa Repko},
  title        = {Walmart chases higher profits powered by warehouse robots and automated claws},
  howpublished = {CNBC},
  month        = apr,
  year         = {2023},
  day          = {11},
  note         = {Online; accessed on \today},
  url          = {https://www.cnbc.com/2023/04/11/walmart-warehouse-automation-powers-higher-profits.html}
}

@article{dauzere2024flexible,
  title={The flexible job shop scheduling problem: A review},
  author={Dauz{\`e}re-P{\'e}r{\`e}s, St{\'e}phane and Ding, Junwen and Shen, Liji and Tamssaouet, Karim},
  journal={European Journal of Operational Research},
  volume={314},
  number={2},
  pages={409--432},
  year={2024},
  publisher={Elsevier}
}

@article{xiong2022survey,
  title={A survey of job shop scheduling problem: The types and models},
  author={Xiong, Hegen and Shi, Shuangyuan and Ren, Danni and Hu, Jinjin},
  journal={Computers \& Operations Research},
  volume={142},
  pages={105731},
  year={2022},
  publisher={Elsevier}
}

@article{mo2024capacitated,
  title={Capacitated Vehicle Routing Problem with Pickup and Delivery in Robotic Mobile Fulfillment Systems},
  author={Mo, Ni-Lei and Zhang, Wencong},
  journal={IEEE Access},
  year={2024},
  publisher={IEEE}
}

@article{brandimarte1993routing,
  title={Routing and scheduling in a flexible job shop by tabu search},
  author={Brandimarte, Paolo},
  journal={Annals of Operations research},
  volume={41},
  number={3},
  pages={157--183},
  year={1993},
  publisher={Springer}
}

@article{lenstra1981complexity,
  title={Complexity of vehicle routing and scheduling problems},
  author={Lenstra, Jan Karel and Kan, AHG Rinnooy},
  journal={Networks},
  volume={11},
  number={2},
  pages={221--227},
  year={1981},
  publisher={Wiley Online Library}
}

@article{hooker2002logic,
  title={Logic, optimization, and constraint programming},
  author={Hooker, John N},
  journal={INFORMS Journal on Computing},
  volume={14},
  number={4},
  pages={295--321},
  year={2002},
  publisher={INFORMS}
}

@article{zhang2019review,
  title={Review of job shop scheduling research and its new perspectives under Industry 4.0},
  author={Zhang, Jian and Ding, Guofu and Zou, Yisheng and Qin, Shengfeng and Fu, Jianlin},
  journal={Journal of intelligent manufacturing},
  volume={30},
  number={4},
  pages={1809--1830},
  year={2019},
  publisher={Springer}
}

@article{xin2025review,
  title={A review of flexible job shop scheduling problems considering transportation vehicles},
  author={Xin, Bin and Lu, Sai and Wang, Qing and Deng, Fang},
  journal={Frontiers of Information Technology \& Electronic Engineering},
  volume={26},
  number={3},
  pages={332--353},
  year={2025},
  publisher={Springer}
}

@article{destouet2023flexible,
  title={Flexible job shop scheduling problem under Industry 5.0: A survey on human reintegration, environmental consideration and resilience improvement},
  author={Destouet, Candice and Tlahig, Houda and Bettayeb, Belgacem and Mazari, B{\'e}lahc{\`e}ne},
  journal={Journal of Manufacturing Systems},
  volume={67},
  pages={155--173},
  year={2023},
  publisher={Elsevier}
}

@article{kusiak2019intelligent,
  title={Intelligent manufacturing: bridging two centuries},
  author={Kusiak, Andrew},
  journal={Journal of intelligent manufacturing},
  volume={30},
  number={1},
  pages={1--2},
  year={2019},
  publisher={Springer}
}
\end{spacing}    

\begin{appendices}
\renewcommand{\thesection}{\Alph{section}}
\section{CP Functions} 
\label{A}

Some CP-related functions must be defined as they are used in the formulation.

\begin{itemize}
\item \textit{Interval Variables:} a type of decision variable representing a time interval during which an activity occurs. In the context of the FJSPT-H, it could be an operation occurring on a machine or a transbot's material transfer. The start time $s$, end time $e$, and presence of the interval are the three essential characteristics. The domain for presence is either 1 if the task was executed, or 0 otherwise. It can only be 0 if the variable is explicitly declared as \textit{optional}, meaning the execution of a task is not required across the set of all intervals. Formally, the domain for an interval is defined as a subset of $\{\perp\} \cup \{[s,e] \mid s,e \in \mathbb{Z},\ s \leq e\}$.

\item \textit{Interval Sequence Variables:} another type of decision variable defined on a set of interval variables responsible for representing its ordering. Any intuitive constraints to define conditions for overlapping or sequence must be explicitly declared through complementary precedence constraints. These variables are used to sort tasks on machines and to order machines for each transbot to service.  

\item \textit{Alternative Constraints:} a type of global constraint used in the assignment of interval variables. Assume $a$ is an interval variable and and $B$ is a list of intervals where $B=[b_1,...,b_n]$. Then, \textit{alternative($a, [b_1,...,b_n]$)} states that if interval $a$ is present, then exactly one interval from $B$ is also present and the chosen variable from $B$ determines the start and end time for $a$.

\item \textit{Precedence Constraints:} a type of global constraint used to examine interval sequence variables. This type specifies the relative positions of variables within the solution and constrains variable ordering. These functions are often self-explanatory, such as \textit{endBeforeStart()} and \textit{noOverlap()}, and determine the start and end of an interval relative to another interval.  
\end{itemize}

\section{Instance Generation} 
\label{B}

The small-scale instances used in this paper were adapted from \cite{deroussi2010simultaneous}, which extended the original \cite{bilge1995time} instances by considering a flexible environment with machine alternatives. To enable zoned experiments for computational results and sensitivity analysis, the original layout matrix is modified by introducing one additional machine representing the handoff point, with its associated travel times sampled from a uniform distribution over [2,8]. As described previously, the resultant layouts account for the aggregation of all legs in the travel times between machines using the distance from the handoff point for intra-zone transfers. For each zoned experiment, machines are assigned to zones in a cyclic sequence, with zones labeled from 1 to the total number of zones, applied consecutively across machines in index order. Transbots are assigned to zones analogously.

The medium-scale instances implement the \textit{Layout 1} generation from \cite{ham2020transfer}, which sampled travel times from a uniform distribution over [20,40] for the benchmark instances from \cite{hurink1994tabu}. For each instance, a distinct layout matrix and sample travel times to the handoff point from the same distribution are generated, ensuring consistency in the underlying spatial assumptions. Zone assignments for both machines and transbots follow the same cyclic assignment as in the small-scale setting. Experiments are restricted to scenarios where there are at least as many transbots as zones. Under this assignment pattern, each zone contains on average $\frac{|\mathcal{M}|}{|\mathcal{Z}|}$ machines and $\frac{|\mathcal{V}|}{|\mathcal{Z}|}$ transbots.

\end{appendices}

\end{document}